\newtheorem{theorem}{Theorem}
\newtheorem{prop}{Proposition}
\newtheorem{cor}{Corollary}
\newtheorem{remark}{Remark}
\theoremstyle{definition}
\def\re{\mathbb{R}}
\def\N{\mathbb{N}}
\def\({\left(}
\def\){\right)}
\def\[{\left[}
\def\]{\right]}
\def\pd{\partial}
\def\lap{\Delta}
\def\ep{\varepsilon}
\def\w{\omega}
\def\la{\lambda}
\def\Sh{\mathbb{S}}
\newtheorem{ThmA}{Theorem A}
\newtheorem{ThmB}{Theorem B}
\newtheorem{ThmC}{Theorem C}
\begin{document}

\begin{frontmatter}



\title{On the compactness of the (non)radial Sobolev spaces
}


\author[SM]{Shuji Machihara}
\ead{machihara@rimath.saitama-u.ac.jp}

\author[MS]{Megumi Sano\corref{Sano}\fnref{label1}}
\ead{smegumi@hiroshima-u.ac.jp}
\fntext[label1]{Corresponding author.}

\address[SM]{Department of Mathematics, Saitama University, Saitama, 338-8570, Japan}
\address[MS]{Laboratory of Mathematics, School of Engineering,
Hiroshima University, Higashi-Hiroshima, 739-8527, Japan}

\begin{keyword}
Radial compactness \sep Spherical average zero \sep Hardy inequality 

\MSC[2020] 46E35 \sep 26D10 
\end{keyword}

\date{\today}

\begin{abstract}
In this note, we give the affirmative answer of the question in \cite{S(NA)}, which is a compactness result of the non-radial Sobolev spaces.  
As an application, we show the existence of an extremal function of the critical Hardy inequality under spherical average zero. 
Next, we give an improvement of the compactness results of the radial Sobolev spaces in \cite{ES}.   
In Appendix, we give an alternative proof of Hardy type inequalities under spherical average zero. 
\end{abstract}

\end{frontmatter}



%
%
\section{Introduction}\label{S Intro}

Let $N \ge 3$ and $2^* = \frac{2N}{N-2}$.  
The Sobolev embedding $H^{1} (\re^N) \hookrightarrow L^q (\re^N)$ is non-compact for any $q \in [2, 2^*]$. However, it is well-known that if we restrict $H^1 (\re^N)$ to the radial Sobolev space $H^1_{\rm rad} (\re^N)$, then the embedding $H^{1}_{\rm rad} (\re^N) \hookrightarrow L^q (\re^N)$ becomes compact for any $q \in (2, 2^*)$ (Ref. \cite{St}). It is called {\it Strauss's radial compactness}. 
We can deny the possibility of non-compactness with respect to translation invariance thanks to the restriction of $H^1(\re^N)$ to $H^1_{\rm rad}(\re^N)$ such as the non-compact sequence $\{ u_m \}_{m=1}^\infty \subset H^1(\re^N)$ of the embedding $H^{1} (\re^N) \hookrightarrow L^q (\re^N)$, where $u_m (x) = u(x + x_m) \,(x \in \re^N), u$ is a smooth function on $\re^N$ and $|x_m| \to \infty \,(m \to \infty)$.
Therefore, we have the compactness and the non-compactness results of three embeddings as follows.

\begin{ThmA}(Strauss's radial compactness) 
Let $N \ge 3$ and $2 < q < 2^*$. Then
\begin{align*}
H^{1} (\re^N) \hookrightarrow L^q (\re^N)&: \,\text{non-compact},\\
H_{\rm rad}^{1} (\re^N) \hookrightarrow L^q (\re^N)&: \,\text{compact,}\\
\( H_{{\rm rad}}^1(\re^N) \)^{\perp} \hookrightarrow L^q (\re^N)&: \,\text{non-compact.}
\end{align*}
\end{ThmA}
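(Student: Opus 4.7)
For the non-compactness of $H^{1}(\re^N) \hookrightarrow L^{q}(\re^N)$, I would follow the construction already suggested in the text: pick a nonzero $u \in C_c^{\infty}(\re^N)$ together with a sequence $|x_m| \to \infty$, and set $u_m(x) := u(x+x_m)$. The sequence is bounded in $H^{1}$ with constant norm and $u_m \rightharpoonup 0$ weakly in $L^{q}$, yet $\|u_m\|_{L^{q}} = \|u\|_{L^{q}} > 0$ for every $m$, which forbids any strongly convergent subsequence.

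For Strauss's compactness, the plan is the classical two-step scheme. First recall the radial pointwise estimate $|u(x)| \le C_N |x|^{-(N-1)/2}\|u\|_{H^{1}(\re^N)}$ valid for $u \in H^{1}_{\rm rad}(\re^N)$. Given a bounded sequence $\{u_m\} \subset H^{1}_{\rm rad}$, reflexivity yields a subsequence converging weakly in $H^{1}$ to some $u \in H^{1}_{\rm rad}$, and Rellich--Kondrachov on each ball $B_R$ produces, after a diagonal extraction, strong convergence in $L^{q}(B_R)$ for every $R > 0$. The exterior tail is controlled by the interpolation
\[
\|u_m - u\|_{L^{q}(B_R^c)} \le \|u_m - u\|_{L^{\infty}(B_R^c)}^{1 - 2/q}\, \|u_m - u\|_{L^{2}(B_R^c)}^{2/q},
\]
in which the first factor vanishes as $R \to \infty$ uniformly in $m$ thanks to the Strauss decay bound, while the second is uniformly bounded; combining the two gives strong $L^{q}(\re^N)$ convergence.

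For the non-compactness within $(H^{1}_{\rm rad}(\re^N))^{\perp}$, the plan is to exhibit an explicit bounded sequence. Integration in polar coordinates gives the characterisation $(H^{1}_{\rm rad}(\re^N))^{\perp} = \{u \in H^{1}(\re^N) : \bar u \equiv 0\}$, where $\bar u(r) := |S^{N-1}|^{-1} \int_{S^{N-1}} u(r\omega)\,d\sigma(\omega)$. Now fix a nonzero radial $v \in C_c^{\infty}(\re^N)$ with $\mathrm{supp}\, v \subset B(0,\rho)$ and $|x_m| \to \infty$, and set
\[
u_m(x) := v(x - x_m) - v(x + x_m).
\]
The change of variables $\omega \mapsto -\omega$ inside the second spherical integral, combined with radiality of $v$, yields $\bar u_m \equiv 0$, so $u_m \in (H^{1}_{\rm rad}(\re^N))^{\perp}$. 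For $m$ large enough that the two translates have disjoint supports, $\|u_m\|_{H^{1}}^{2} = 2\|v\|_{H^{1}}^{2}$ is bounded while $\|u_m\|_{L^{q}}^{q} = 2\|v\|_{L^{q}}^{q} > 0$. Testing against $C_c^{\infty}$ forces any weak limit of $u_m$ to be $0$, so any strongly $L^{q}$-convergent subsequence would have to tend to $0$, contradicting the lower bound on $\|u_m\|_{L^{q}}$.

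The non-compactness parts reduce to the standard translation-to-infinity idea, and the radial part is Strauss's textbook argument; the main obstacle is the third claim, specifically the clean identification of $(H^{1}_{\rm rad}(\re^N))^{\perp}$ with spherical-average-zero functions and the design of an antisymmetric translate pair that keeps the orthogonality while still shoving mass to infinity. Once that construction is in place, the remaining verification is routine.
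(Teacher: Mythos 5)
Your proposal is correct, and on the first two items it coincides with the paper: the translation sequence for the non-compactness of $H^{1}(\re^N)\hookrightarrow L^{q}(\re^N)$ is exactly the one sketched in the introduction, and the compactness of the radial embedding is simply quoted from Strauss \cite{St}, while your two-step proof via the radial decay bound, Rellich--Kondrachov on balls, and the interpolation $\|f\|_{L^{q}}\le\|f\|_{L^{\infty}}^{1-2/q}\|f\|_{L^{2}}^{2/q}$ on the exterior region is the standard argument and is sound since $q>2$. Where you genuinely diverge is the third item. The paper deduces the non-compactness of $(H^{1}_{\rm rad}(\re^N))^{\perp}\hookrightarrow L^{q}(\re^N)$ from the purely abstract Proposition \ref{Prop non-cpt}: if $X=A\oplus A^{\perp}$, $X\hookrightarrow Y$ is non-compact and $A\hookrightarrow Y$ is compact, then $A^{\perp}\hookrightarrow Y$ must be non-compact, since otherwise splitting any bounded sequence as $u_m=v_m+w_m$ would force $X\hookrightarrow Y$ to be compact. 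That argument is soft, needs no description of $A^{\perp}$, and recycles the first two items as hypotheses. You instead exhibit an explicit non-compact sequence, the antisymmetrized translates $u_m=v(\cdot-x_m)-v(\cdot+x_m)$ of a radial bump; the substitution $\omega\mapsto-\omega$ together with $v(-y)=v(y)$ does give $\bar u_m\equiv 0$, and the disjoint-support computation of the $H^{1}$ and $L^{q}$ norms is right. Your route costs one extra ingredient the paper never has to establish on $\re^N$: the identification of $(H^{1}_{\rm rad}(\re^N))^{\perp}$ with the spherical-average-zero functions, i.e., that $u\mapsto\bar u$ is the orthogonal projection of $H^{1}(\re^N)$ onto $H^{1}_{\rm rad}(\re^N)$ (one must check $\overline{\nabla u\cdot\tfrac{x}{|x|}}=\partial_r\bar u$, that $\bar u\in H^{1}_{\rm rad}(\re^N)$, and that the angular gradient of a radial test function vanishes). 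This is standard, and the paper itself invokes the analogous fact on the ball with a reference to \cite{BMO}, but in a self-contained write-up you should include that short computation. In exchange your argument is more informative: it pinpoints translation to infinity as the concrete mechanism of non-compactness that survives inside the orthogonal complement, whereas the paper's abstract argument only certifies that \emph{some} mechanism must survive.
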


\noindent
Here, $\( H_{{\rm rad}}^1(\re^N) \)^{\perp}$ is the orthogonal complement of the radial Sobolev space $H_{{\rm rad}}^{1} (\re^N)$ and the last one in Theorem A follows from Proposition \ref{Prop non-cpt} in Appendix. 
For simplicity, we call $\( H_{{\rm rad}}^1(\re^N) \)^{\perp}$ the non-radial Sobolev space.
Several generalizations of Strauss's radial compactness have been investigated, see \cite{Li,FZZ,HS}.
Note that even if we restrict $H^1 (\re^N)$ to $H^1_{\rm rad} (\re^N)$, the embedding $H^{1}_{\rm rad} (\re^N) \hookrightarrow L^q (\re^N)$ is still non-compact for $q=2$ or $2^*$. In fact, in the case where $q =2$, there is a non-compact sequence with respect to $L^2$-invariance, which is {\it a vanishing sequence} $\{ u_m \}_{m=1}^\infty \subset H^{1}_{\rm rad} (\re^N)$, where $u_m (x) = m^{-\frac{N}{2}} u\( \frac{x}{m} \) \,(x \in \re^N, m \in \N)$. On the other hand, in the case where $q =2^*$, there is a non-compact sequence with respect to $\dot{H}^1$ and $L^{2^*}$-invariance, which is {\it a concentration sequence} $\{ u_m \}_{m=1}^\infty \subset H^{1}_{\rm rad} (\re^N)$, where $u_m (x) = m^{\frac{N-2}{2}} u\( mx \) \,(x \in \re^N, m \in \N)$. As we can see from Strauss's radial compactness, we may deny the possibility of non-compactness 
under some restriction of $H^1(\re^N)$.
In fact, if we restrict $H^{1}_{\rm rad} (\re^N)$ to $H^{1}_{\rm rad} (\re^N) \cap L^p (\re^N)$, then the embedding $H^{1}_{\rm rad} (\re^N) \cap L^p (\re^N ) \hookrightarrow L^2 (\re^N)$ becomes compact for $p \in [1,2)$ and the embedding $H^{1}_{\rm rad} (\re^N) \cap L^p (\re^N ) \hookrightarrow L^{2^*} (\re^N)$ becomes compact for $p> 2^*$, see \cite{ES} Corollary 1. In \S \ref{S radial}, we extend these results to Lorentz spaces $L^{p,q}(\re^N)$.


In this note, we give another example of the compactness of the Sobolev embedding by answering the following question in \cite{S(NA)}: 

Let $B_1^N \subset \re^N$ be the unit ball, $a>1, q \ge 2$. Then, {\it is the embedding $\( H_{0, {\rm rad}}^1(B_1^2) \)^{\perp} \hookrightarrow L^q \( B_1^2 ; |x|^{-2} (\log \frac{a}{|x|})^{-1-\frac{q}{2}}\,dx \)$ compact?}

\noindent
This question comes from a heuristic consideration and calculation via harmonic transplantation which is proposed by Hersch \cite{H}. For a summary of harmonic transplantation, see e.g. \cite{ST(HT)} \S 3. 
First, we give the affirmative answer of the above question as follows. 

\begin{theorem}\label{Thm cpt}
Let $B_1^2 \subset \re^2$ be the unit ball, $a>1$ and $q \ge 2$. Then
 the embedding 
$\( H_{0, {\rm rad}}^1(B_1^2) \)^{\perp} \hookrightarrow L^q \( B_1^2 ; |x|^{-2} \( \log \frac{a}{|x|} \)^{-1-\frac{q}{2}}\,dx \)$ is compact. 
\end{theorem}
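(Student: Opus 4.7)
My plan is the classical domain-splitting strategy: reduce to showing that a weakly null, uniformly $H^1$-bounded sequence $v_n\in(H^1_{0,{\rm rad}}(B_1^2))^{\perp}$ converges strongly to zero in the target space, by controlling its mass separately near and away from the singular point $\{|x|=0\}$ of the weight $w(x):=|x|^{-2}(\log(a/|x|))^{-1-q/2}$. Given a bounded sequence $\{u_n\}\subset(H^1_{0,{\rm rad}}(B_1^2))^{\perp}$, a subsequence converges weakly in $H^1_0(B_1^2)$ to some $u$ which, by closedness of the perpendicular subspace, also lies in $(H^1_{0,{\rm rad}}(B_1^2))^{\perp}$. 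Setting $v_n:=u_n-u$ and fixing $\delta\in(0,1)$, I split
\[
\int_{B_1^2}|v_n|^q\,w\,dx \;=\; I_n(\delta)\;+\;J_n(\delta),
\]
with $I_n(\delta)$ and $J_n(\delta)$ the integrals over $B_\delta$ and $B_1^2\setminus B_\delta$. On $B_1^2\setminus B_\delta$ the weight $w$ is bounded, so the two-dimensional Rellich--Kondrachov theorem (which gives compactness $H^1_0\hookrightarrow L^q$ for every $q<\infty$) yields $J_n(\delta)\to 0$ as $n\to\infty$ for each fixed $\delta$.

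The main work is the uniform small-ball estimate $\sup_n I_n(\delta)\to 0$ as $\delta\to 0$. I would perform the dyadic decomposition $B_\delta=\bigcup_{k\ge 0}A_k$ with $A_k:=\{2^{-k-1}\delta<|x|\le 2^{-k}\delta\}$ and rescale each annulus to the fixed annulus $\tilde A:=\{1/2<|y|\le 1\}$ via $w^k(y):=v_n(2^{-k}\delta\,y)$. The zero spherical-average property is scale-invariant, so the Poincar\'e inequality on $S^1$ yields the improved Hardy-type bound
\[
\int_{A_k}|v_n|^2\,dx \;\le\; (2^{-k}\delta)^2\int_{A_k}|\nabla v_n|^2\,dx,
\]
which translates under rescaling to $\|w^k\|_{L^2(\tilde A)}\le\|\nabla w^k\|_{L^2(\tilde A)}$. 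Combining with the two-dimensional Sobolev embedding $H^1(\tilde A)\hookrightarrow L^q(\tilde A)$ and unscaling gives $\int_{A_k}|v_n|^q\,dx \le C(2^{-k}\delta)^2\|\nabla v_n\|_{L^2(A_k)}^q$. Since $|x|^{-2}\le 4(2^{-k}\delta)^{-2}$ and $(\log(a/|x|))^{-1-q/2}\le(\log(a/\delta)+k\log 2)^{-1-q/2}$ on $A_k$, summing over $k$ and using $\|\nabla v_n\|_{L^2(A_k)}\le M:=\sup_n\|\nabla v_n\|_{L^2}$ yields
\[
I_n(\delta) \;\le\; CM^q\sum_{k\ge 0}\bigl(\log(a/\delta)+k\log 2\bigr)^{-1-q/2} \;\le\; C'M^q\,(\log(a/\delta))^{-q/2},
\]
which tends to zero uniformly in $n$ as $\delta\to 0$.

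The main obstacle, and the only place where orthogonality to $H^1_{0,{\rm rad}}$ enters, is the improved Poincar\'e estimate on each dyadic annulus. Without it the rescaled $w^k$ would carry no $L^2$ control from their Dirichlet energy alone, the dyadic series would diverge in $k$, and compactness would fail---consistent with the fact that the full-space embedding $H^1_0(B_1^2)\hookrightarrow L^q(B_1^2;w\,dx)$ is only continuous and not compact. Letting $n\to\infty$ first and then $\delta\to 0$ concludes the proof.
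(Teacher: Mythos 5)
Your proof is correct, and the overall skeleton (weak limit, then splitting the weighted integral near and away from the origin, with the orthogonality to radial functions entering only in the small-ball estimate) matches the paper's. The mechanism of the small-ball estimate, however, is genuinely different. The paper bounds $\(\log\frac{a}{|x|}\)^{-1-q/2}$ on $B_\ep$ by its value at $|x|=\ep$ and then invokes the global Hardy inequality under spherical average zero, $\int_{B_1^2}|u|^2/|x|^2\,dx\le\int_{B_1^2}|\nabla u|^2\,dx$ (Proposition \ref{Prop bdd} with $N=2$), which settles $q=2$ in two lines; for $q>2$ it interpolates via H\"older between the $q=2$ convergence and the known continuous embedding $H_0^1(B_1^2)\hookrightarrow L^r\(B_1^2;|x|^{-2}(\log\frac{1}{|x|})^{-1-r/2}dx\)$. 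You instead localize: a dyadic decomposition of $B_\delta$, the scale-invariant angular Poincar\'e inequality on each annulus $A_k$, and the Sobolev embedding on a fixed reference annulus, yielding $\int_{A_k}|v_n|^q w\,dx\le C\(\log\frac{a}{\delta}+k\log 2\)^{-1-q/2}\|\nabla v_n\|_{L^2(A_k)}^q$, summable in $k$ precisely because $q\ge 2$. Your route treats all $q\ge2$ at once, is self-contained (it does not presuppose the continuous weighted embedding for supercritical exponents $r$), and makes transparent where the logarithmic gain $(\log\frac{a}{\delta})^{-q/2}$ comes from; the paper's route is shorter, especially at $q=2$, because the improved Hardy inequality packages the annular Poincar\'e argument into a single global inequality. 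Both correctly use $a>1$ only to keep the weight bounded away from the origin. The one point you pass over silently, as does the paper, is the identification of $\(H_{0,{\rm rad}}^1(B_1^2)\)^{\perp}$ with the functions of spherical average zero for a.e.\ radius, which is needed before the angular Poincar\'e inequality can be applied to $v_n=u_n-u$; this is standard via the Fourier decomposition in $\theta$ and is cited in the paper.
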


By combining Theorem \ref{Thm cpt} and the non-compactness of the embedding $H_{0, {\rm rad}}^{1} (B_1^2) \hookrightarrow L^q \( B_1^2 ; |x|^{-2} \( \log \frac{a}{|x|} \)^{-1-\frac{q}{2}}\,dx \)$, we have the following.
\begin{cor}\label{Cor cpt}(``Non-radial compactness'') 
Let $B_1^2 \subset \re^2$ be the unit ball, $a>1$ and $q \ge 2$. Then
\begin{align*}
H_0^{1} (B_1^2) \hookrightarrow L^q \( B_1^2 ; |x|^{-2} \( \log \frac{a}{|x|} \)^{-1-\frac{q}{2}}\,dx \)&: \,\text{non-compact,}\\
H_{0, {\rm rad}}^{1} (B_1^2) \hookrightarrow L^q \( B_1^2 ; |x|^{-2} \( \log \frac{a}{|x|} \)^{-1-\frac{q}{2}}\,dx \)&: \,\text{non-compact,}\\
\( H_{0, {\rm rad}}^1(B_1^2) \)^{\perp} \hookrightarrow L^q \( B_1^2 ; |x|^{-2} \( \log \frac{a}{|x|} \)^{-1-\frac{q}{2}}\,dx \)&: \,\text{compact.}
\end{align*}
\end{cor}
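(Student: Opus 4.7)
The third item is exactly Theorem \ref{Thm cpt}, so only the first two non-compactness assertions remain. Since the inclusion $H^1_{0,{\rm rad}}(B_1^2) \hookrightarrow H^1_0(B_1^2)$ is continuous, any sequence witnessing non-compactness in the radial space also witnesses non-compactness in $H^1_0(B_1^2)$. The plan is therefore to exhibit one bounded sequence in $H^1_{0,{\rm rad}}(B_1^2)$ with no subsequence convergent in the weighted space $L^q\bigl(B_1^2 ; |x|^{-2}(\log(a/|x|))^{-1-q/2}\,dx\bigr)$, which will dispatch both statements.

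The construction exploits the scale invariance of the associated critical Hardy-type inequality in two dimensions after a logarithmic change of variables. Writing a radial $u$ as $u(x) = v(\log(a/|x|))$ and setting $t = \log(a/|x|) \in [\log a, \infty)$, a direct computation in polar coordinates gives
\begin{align*}
\int_{B_1^2} |\nabla u|^2\,dx &= 2\pi \int_{\log a}^\infty |v'(t)|^2\, dt, \\
\int_{B_1^2} \frac{|u|^q}{|x|^2 (\log(a/|x|))^{1+q/2}}\, dx &= 2\pi \int_{\log a}^\infty \frac{|v(t)|^q}{t^{1+q/2}}\, dt.
\end{align*}
Both right-hand sides are invariant under the one-parameter family $v(t) \mapsto \lambda^{-1/2} v(\lambda t)$. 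I would therefore fix a nontrivial $v \in C_c^\infty((1,2))$ and set $v_m(t) = m^{1/2}\, v(t/m)$, so that for all sufficiently large $m$ the support of $v_m$ lies in $[m,2m] \subset [\log a, \infty)$. The corresponding radial function $u_m(x) := v_m(\log(a/|x|))$ then belongs to $H^1_{0,{\rm rad}}(B_1^2)$ and is supported in the thin shell $\{ae^{-2m} \le |x| \le ae^{-m}\}$.

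By construction, $\|u_m\|_{\dot H^1}$ and $\|u_m\|_{L^q(w\,dx)}$ are independent of $m$ and nonzero. On the other hand, the crude pointwise bound $|u_m| \le m^{1/2}\|v\|_\infty$ combined with the shell having Lebesgue measure $O(e^{-2m})$ forces $u_m \to 0$ in $L^2(B_1^2)$; together with boundedness in $H^1_0(B_1^2)$, this yields $u_m \rightharpoonup 0$ weakly. Since the weighted $L^q$ norms are bounded away from zero, no subsequence converges in the target space, establishing non-compactness of both the radial and the full embeddings. The main technical point here is only the change-of-variables computation and the verification of scale invariance, which are routine; all the substantive work lies in Theorem \ref{Thm cpt}, whose role in the corollary is precisely to rule out the analogous concentration phenomenon for functions in $(H^1_{0,{\rm rad}})^\perp$.
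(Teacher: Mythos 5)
Your proposal is correct and is essentially the paper's own argument: the paper also reduces both non-compactness claims to the radial one and then uses the concentrating family $u_\lambda(x)=\lambda^{-1/2}u(y)$, $y=(|x|/a)^{\lambda-1}x$, which in your logarithmic variable $t=\log(a/|x|)$ is exactly the scaling $v\mapsto\lambda^{-1/2}v(\lambda t)$ you employ. Your explicit verification that $u_m\rightharpoonup 0$ (via the $L^2$ decay on the shrinking shell) is a slightly more detailed justification of a step the paper only asserts, but the route is the same.
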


\noindent
In Theorem A, the non-compact embedding becomes compact under the restriction of $H^1 (\re^N)$ to $H^1_{\rm rad}(\re^N)$, while the non-compact embedding becomes compact under the restriction of $H_0^{1} (B_1^2)$ to $\( H_{0, {\rm rad}}^1(B_1^2) \)^{\perp}$ in Corollary \ref{Cor cpt}. In this sense, 
Corollary \ref{Cor cpt} implies an opposite phenomenon to Strauss's radial compactness. 


Second, as an application of Theorem \ref{Thm cpt} with $q=2$, we can obtain an extremal of the critical Hardy inequality under spherical average zero:
\begin{align*}
A_a \int_{B_1^2} \frac{|u|^2}{|x|^2 \( \log \frac{a}{|x|} \)^2} \,dx \le \int_{B_1^2} |\nabla u|^2 \,dx
\end{align*}
for any $u \in H_0^1 (B_1^2)$ with $\int_{0}^{2\pi} u(r\theta) \,d\theta = 0\, ({}^{\forall}r \in [0,1])$. 
Namely, we can obtain a minimizer of the following minimization problem $A_a$
\begin{align*}
A_a  &:= \inf \left\{ \, \frac{\int_{B_1^2} |\nabla u|^2 \,dx }{\int_{B_1^2} \frac{|u|^2}{|x|^2 (\log \frac{a}{|x|})^2} \,dx } \,\,\middle| \,\, u \in H_0^1 (B_1^2) \setminus \{ 0\}, \, \int_0^{2\pi} u(r\theta) \,d \theta =0 \,({}^{\forall}r \in [0,1] \,) \, \right\}\\
&= \inf \left\{ \, \frac{\int_{B_1^2} |\nabla u|^2 \,dx }{\int_{B_1^2} \frac{|u|^2}{|x|^2 (\log \frac{a}{|x|})^2} \,dx } \,\,\middle| \,\, u \in \( H_{0, {\rm rad}}^1 (B_1^2) \)^{\perp} \setminus \{ 0\} \, \right\}\\
&\ge B_a := \inf \left\{ \, \frac{\int_{B_1^2} |\nabla u|^2 \,dx }{\int_{B_1^2} \frac{|u|^2}{|x|^2 (\log \frac{a}{|x|})^2} \,dx } \,\,\middle| \,\, u \in H_0^1 (B_1^2) \setminus \{ 0\} \, \right\} = \frac{1}{4}.
\end{align*}
For the second equality, see e.g. \cite{BMO}. It is well-known that for any $a \ge 1$, there is no minimizer of $B_a = \frac{1}{4}$, see e.g. \cite{AS,II}. 

For the Hardy inequality under spherical average zero, see e.g.  \cite{BL,EF,BMO} or Proposition \ref{Prop R^N} in Appendix.

\begin{cor}\label{Cor A_a q=2}
$A_a > \frac{1}{4} = B_a$ for $a>1$ and $A_1 = \frac{1}{4} = B_1$ . Moreover, there exists a minimizer of $A_a$ if and only if $a>1$.
\end{cor}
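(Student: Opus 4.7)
The plan rests on Theorem \ref{Thm cpt} with $q=2$ together with the classical fact, recalled in the introduction, that $B_a = \frac14$ is never attained for $a\ge 1$. The argument splits into three pieces.

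\emph{Existence for $a>1$.} Apply the direct method. Let $\{u_n\} \subset (H^1_{0,{\rm rad}}(B_1^2))^\perp$ be a minimizing sequence normalized by $\int_{B_1^2} |u_n|^2 |x|^{-2}(\log\frac{a}{|x|})^{-2}\,dx = 1$, so that $\int|\nabla u_n|^2\,dx \to A_a$. Being bounded in $H_0^1(B_1^2)$, a subsequence converges weakly to some $u^*$ in the closed subspace $(H^1_{0,{\rm rad}}(B_1^2))^\perp$. Theorem \ref{Thm cpt} makes the embedding into the weighted $L^2$-space compact, so the normalization of the denominator passes to the limit, forcing $u^*\not\equiv 0$; combined with weak lower semicontinuity of the Dirichlet integral, $u^*$ is a minimizer.

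\emph{The identity $A_1 = \frac14$.} Since $A_1\ge B_1 = \frac14$ is automatic, it suffices to exhibit a non-radial minimizing sequence. Let $\psi_n$ be a smooth cutoff equal to $1$ on $[1/n,\delta]$ and supported in $[1/(2n),2\delta]$ for a fixed small $\delta>0$, and set $v_n(r) = (\log\frac{1}{r})^{1/2}\psi_n(\log\frac{1}{r})$ and $u_n(x) = v_n(|x|)\cos\theta$. Since $\int_0^{2\pi}\cos\theta\,d\theta = 0$, $u_n\in (H^1_{0,{\rm rad}}(B_1^2))^\perp$. Separation of variables and the substitution $t=\log\frac{1}{r}$, $w_n(t) := \sqrt{t}\,\psi_n(t) = v_n(e^{-t})$ reduce the Rayleigh quotient for $u_n$ to
\[
\frac{\int_0^\infty (w_n'(t))^2\,dt + \int_0^\infty w_n(t)^2\,dt}{\int_0^\infty w_n(t)^2/t^2\,dt}.
\]
Here $w_n = \sqrt{t}\,\psi_n$ is the classical minimizing sequence for the one-dimensional Hardy inequality $\int_0^\infty (w')^2\,dt \ge \frac14 \int_0^\infty w^2/t^2\,dt$: the first term over the denominator tends to $\frac14$ (both grow like $\log n$ with only $O(1)$ cutoff corrections), while the extra term $\int_0^\infty w_n^2\,dt = \int_0^{2\delta} t\,\psi_n^2\,dt \le 2\delta^2$ stays bounded and so is negligible against the $\log n$-sized denominator. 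Hence the ratio tends to $\frac14$, giving $A_1 \le \frac14$.

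\emph{Strict inequality and non-existence.} Both come from the inclusion $(H^1_{0,{\rm rad}}(B_1^2))^\perp \subset H_0^1(B_1^2)$: any minimizer of $A_a$ is admissible for $B_a$, so if $A_a = B_a = \frac14$ had a minimizer $u^*$, then $u^*$ would attain $B_a$, contradicting its non-attainment (cf.\ \cite{AS,II}). Applied at $a>1$ with the minimizer produced in the first step, this rules out $A_a = \frac14$, hence $A_a > \frac14$; applied at $a=1$ with $A_1 = \frac14$ from the second step, the same observation precludes a minimizer of $A_1$.

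The main obstacle is the one-dimensional asymptotic analysis in the second step. One must verify that the boundary contributions in $\int_0^\infty (w_n')^2\,dt = \int_0^\infty (\frac{\psi_n}{2\sqrt{t}}+\sqrt{t}\,\psi_n')^2\,dt$ are only $O(1)$, so that the leading $\frac14\log n$ from the flat region $\{\psi_n\equiv 1\}$ survives, and that the angular term $\int_0^\infty t\,\psi_n^2\,dt$ is genuinely $o(\log n)$ -- a fact that crucially uses the concentration of $v_n$ near $r=1$ (equivalently $t$ near $0$), dual to the usual concentration at $r=0$ in the attainment problem for $B_a$.
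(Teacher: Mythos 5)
Your proposal is correct and follows essentially the same route as the paper: direct method plus the compactness of Theorem \ref{Thm cpt} for existence when $a>1$, a non-radial test family of the form $g(\theta)\,f(r)$ concentrating near $|x|=1$ to show $A_1\le \frac14$, and the non-attainment of $B_a=\frac14$ to deduce both the strict inequality for $a>1$ and the non-existence of a minimizer at $a=1$. The only difference is cosmetic: you use the cutoff family $w_n(t)=\sqrt{t}\,\psi_n(t)$ (with $t=\log\frac1r$) where the paper uses $f_\alpha(r)=(\log\frac1r)^\alpha$ with $\alpha\to\frac12^{+}$; both give the same asymptotics.
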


Also, in the case where $q>2$, we obtain the following corollary.

\begin{cor}\label{Cor A_a q>2}
Let $q>2$ and $a>1$. Set
\begin{align*}
D_a  &:= \inf \left\{ \, \frac{\int_{B_1^2} |\nabla u|^2 \,dx }{\( \int_{B_1^2} \frac{|u|^q}{|x|^2 (\log \frac{a}{|x|})^{\frac{q}{2}+1}} \,dx \)^{\frac{2}{q}} } \,\,\middle| \,\, u \in H_0^1 (B_1^2) \setminus \{ 0\}, \, \int_0^{2\pi} u(r\theta) \,d \theta =0 \,({}^{\forall}r \in [0,1] \,) \, \right\}.
\end{align*}
Then there exists a minimizer of $D_a \,(>0)$. 
\end{cor}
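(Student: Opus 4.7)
The plan is to prove Corollary \ref{Cor A_a q>2} by the direct method of the calculus of variations, with Theorem \ref{Thm cpt} supplying the crucial compactness. First, I would rewrite the admissible class in the definition of $D_a$ as $(H_{0,{\rm rad}}^1(B_1^2))^{\perp}\setminus\{0\}$, exactly as the introduction already does for the functional $A_a$. This presents $D_a$ as an infimum over the closed subspace on which Theorem \ref{Thm cpt} applies.

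From the continuous embedding implicit in Theorem \ref{Thm cpt} (applied with this $q > 2$), every $u \in (H_{0,{\rm rad}}^1(B_1^2))^{\perp}$ satisfies
\begin{align*}
\left( \int_{B_1^2} \frac{|u|^q}{|x|^2 (\log \frac{a}{|x|})^{\frac{q}{2}+1}} \,dx \right)^{\frac{2}{q}} \le C \int_{B_1^2} |\nabla u|^2\,dx
\end{align*}
for some $C = C(a,q) > 0$, which already gives $D_a \ge 1/C > 0$.

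Next, I would take a minimizing sequence $\{u_n\}_{n=1}^{\infty} \subset (H_{0,{\rm rad}}^1(B_1^2))^{\perp}$ normalized so that the weighted $L^q$ denominator of each $u_n$ equals $1$. Then $\int_{B_1^2}|\nabla u_n|^2\,dx \to D_a$, so $\{u_n\}$ is bounded in $H_0^1(B_1^2)$. Since $(H_{0,{\rm rad}}^1(B_1^2))^{\perp}$ is a closed linear subspace of $H_0^1(B_1^2)$, it is weakly closed; hence a subsequence (still denoted $\{u_n\}$) converges weakly to some $u_\infty \in (H_{0,{\rm rad}}^1(B_1^2))^{\perp}$. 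Theorem \ref{Thm cpt} then upgrades this weak convergence to strong convergence in $L^q(B_1^2; |x|^{-2}(\log \frac{a}{|x|})^{-1-\frac{q}{2}}\,dx)$, so the weighted $L^q$ denominator of $u_\infty$ also equals $1$; in particular $u_\infty \not\equiv 0$. Weak lower semicontinuity of $u \mapsto \int_{B_1^2}|\nabla u|^2\,dx$ then yields $\int_{B_1^2}|\nabla u_\infty|^2\,dx \le D_a$, and the admissibility of $u_\infty$ forces equality, so $u_\infty$ is a minimizer and the attained value $D_a$ is positive.

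The argument is essentially routine once Theorem \ref{Thm cpt} is granted; the only subtlety is ensuring that the weak limit preserves the spherical-average-zero constraint, which follows from the weak closedness of the subspace $(H_{0,{\rm rad}}^1(B_1^2))^{\perp}$. Unlike the borderline case $q = 2$ in Corollary \ref{Cor A_a q=2}, where attaining the infimum required the strict inequality $A_a > B_a$ to rule out concentration or vanishing profiles, here no such obstruction appears because Theorem \ref{Thm cpt} provides full compactness throughout the range $q > 2$.
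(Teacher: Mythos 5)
Your proof is correct and follows essentially the same route as the paper, which simply states that the argument for Corollary \ref{Cor A_a q>2} is identical to that for Corollary \ref{Cor A_a q=2}: normalize the weighted $L^q$ denominator along a minimizing sequence, extract a weak limit in the weakly closed subspace $\( H_{0, {\rm rad}}^1(B_1^2) \)^{\perp}$, upgrade to strong convergence in the weighted $L^q$ space via Theorem \ref{Thm cpt}, and conclude by weak lower semicontinuity of the Dirichlet energy. One small inaccuracy in your closing remark: in the paper's proof of Corollary \ref{Cor A_a q=2} the case $a>1$ is also handled by this same compactness argument, and the strict inequality $A_a > B_a$ is \emph{deduced} from attainment (since $B_a$ is never attained) rather than being an input needed to rule out loss of compactness.
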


Since the proof of Corollary \ref{Cor A_a q>2} is the same as the proof of Corollary \ref{Cor A_a q=2}, we omit the proof of Corollary \ref{Cor A_a q>2}. 
Set
\begin{align*}
G_a &:= \inf \left\{ \, \frac{\int_{B_1^2} |\nabla u|^2 \,dx }{\( \int_{B_1^2} \frac{|u|^q}{|x|^2 (\log \frac{a}{|x|})^{\frac{q}{2}+1}} \,dx \)^{\frac{2}{q}} } \,\,\middle| \,\, u \in H_0^1 (B_1^2) \setminus \{ 0\} \, \right\},\\
G_{a, {\rm rad}} &:= \inf \left\{ \, \frac{\int_{B_1^2} |\nabla u|^2 \,dx }{\( \int_{B_1^2} \frac{|u|^q}{|x|^2 (\log \frac{a}{|x|})^{\frac{q}{2}+1}} \,dx \)^{\frac{2}{q}} } \,\,\middle| \,\, u \in H_{0, {\rm rad}}^1 (B_1^2) \setminus \{ 0\} \, \right\},\\
D_a  &= \inf \left\{ \, \frac{\int_{B_1^2} |\nabla u|^2 \,dx }{\( \int_{B_1^2} \frac{|u|^q}{|x|^2 (\log \frac{a}{|x|})^{\frac{q}{2}+1}} \,dx \)^{\frac{2}{q}} } \,\,\middle| \,\, u \in \( H_{0, {\rm rad}}^1 (B_1^2) \)^{\perp} \setminus \{ 0\} \, \right\}.
\end{align*}
It is known that $G_{a, {\rm rad}}$ is not attained for any $a \in (1, \infty)$, see \cite{HK}. The second author \cite{S(JDE)} showed that there exists $a_* >1$ such that $G_a < G_{a, {\rm rad}}$ and $G_a$ is attained for $a \in (1, a_*)$, and $G_a = G_{a, {\rm rad}}$ and $G_a$ is not attained for $a > a_*$. We can interpret the existence of a minimizer of $G_a$ for $a \in (1, a_*)$ intuitively as the embedding $\( H_{0, {\rm rad}}^1(B_1^2) \)^{\perp} \hookrightarrow L^q \( B_1^2 ; |x|^{-2} \( \log \frac{a}{|x|} \)^{-1-\frac{q}{2}}\,dx \)$ is dominant when $G_a < G_{a, {\rm rad}}$, and the existence of a minimizer of $G_a$ comes from the effect of the compactness of the embedding shown in  Theorem \ref{Thm cpt}.

\begin{remark}($a=1$)
In the case where $a=1$, we have the followings.
\begin{align*}
H_0^{1} (B_1^2) &\not\hookrightarrow L^q \( B_1^2 ; |x|^{-2} \( \log \frac{1}{|x|} \)^{-1-\frac{q}{2}}\,dx \)\\
H_{0, {\rm rad}}^{1} (B_1^2) &\hookrightarrow L^q \( B_1^2 ; |x|^{-2} \( \log \frac{1}{|x|} \)^{-1-\frac{q}{2}}\,dx \)\\
\( H_{0, {\rm rad}}^1(B_1^2) \)^{\perp} &\not\hookrightarrow L^q \( B_1^2 ; |x|^{-2} \( \log \frac{1}{|x|} \)^{-1-\frac{q}{2}}\,dx \)
\end{align*}
For the first and the second one, see \cite{HK,S(JDE)}. The third one follows from the first and the second one. Therefore, we have $D_1 = G_1 = 0$. 
\end{remark}

The minimization problems associated with the Rellich type inequalities under spherical average zero are studied by \cite{CM}. 
Also, Hardy type inequalities with another average zero condition, which comes from Neumann problem, are studied by \cite{CPR,ST}.   

In the next section, we show Theorem \ref{Thm cpt}, Corollary \ref{Cor cpt} and Corollary \ref{Cor A_a q=2}. 
In \S \ref{S radial}, we show that the embedding $H^{1}_{\rm rad} (\re^N) \cap L^{p,\infty} (\re^N ) \hookrightarrow L^2 (\re^N)$ is compact for $p \in [1,2)$ and the embedding $H^{1}_{\rm rad} (\re^N) \cap L^{p, \infty} (\re^N ) \hookrightarrow L^{2^*} (\re^N)$ is compact for $p > 2^*$. Since $L^p (\re^N) \subsetneq L^{p, \infty} (\re^N)$, it is an improvement of Corollary 1 in \cite{ES}. 
In Appendix, we give an alternative proof of Hardy type inequalities under spherical average zero. 
In \cite{BMO} p.13-14, the optimality of the constant of the Hardy inequality under spherical average zero was shown on the whole space by using Fourier analysis and Lemma 3.8 in \cite{Y}.
On the other hand, our proof is available not only on the whole space but also on the ball. Furthermore, our proof is self-contained.

\noindent
{\bf Notation.} \,
$|A|$ denotes the Lebesgue measure of a set $A \subset \re^N$ and $\w_{N-1}$ denotes an area of the unit sphere in $\re^N$. 
$H_0^1 (B_1^N)$ is the completion of $C_0^\infty (B_1^N)$ with respect to the norm $\| \nabla (\cdot ) \|_2$.
Throughout this note, if a radial function $u$ is written as $u(x) = \tilde{u}(|x|)$ by some function $\tilde{u} = \tilde{u}(r)$, we write $u(x)= u(|x|)$ with admitting some ambiguity. 
Also, we use $C$ or $C_i \,(i \in \N)$ as positive constants. If necessary and these constants depend on $\ep$, they will be denoted by $C(\ep)$.

%
%
\section{Compactness of the non-radial Sobolev spaces: Proofs of Theorem \ref{Thm cpt}, Corollary \ref{Cor cpt} and Corollary \ref{Cor A_a q=2}}\label{S Proof}

First, we show Theorem \ref{Thm cpt} and Corollary \ref{Cor cpt}.

\begin{proof}{\it (Theorem \ref{Thm cpt})}
First, we assume that $q=2$. 
Let $\{ u_m \}_{m=1}^\infty \subset \( H_{0, {\rm rad}}^1(B_1^2) \)^{\perp}$ be a bounded sequence. Since $\( H_{0, {\rm rad}}^1(B_1^2) \)^{\perp}$ is a closed subspace of the reflexive Banach space $H_{0}^1(B_1^2)$, $\( H_{0, {\rm rad}}^1(B_1^2) \)^{\perp}$ is also the reflexive Banach space. Therefore, there exist $u \in \( H_{0, {\rm rad}}^1(B_1^2) \)^{\perp}$ and a subsequence $\{ u_{m_j} \}_{j=1}^\infty$ (we use  $\{ u_m \}$ again) such that $u_m \rightharpoonup u$ in $\( H_{0, {\rm rad}}^1(B_1^2) \)^{\perp}$ as $m \to \infty$. Since $H_0^1(B_1^2) \hookrightarrow L^2 (B_1^2)$ is compact, we have $u_m \to u$ in $L^2(B_1^2)$. Therefore, we have
\begin{align*}
\int_{B_1^2} \frac{|u_m - u|^2}{|x|^2 (\log \frac{a}{|x|})^2}\,dx 
&\le \( \log \frac{a}{\ep} \)^{-2} \int_{B^2_\ep} \frac{|u_m - u|^2}{|x|^2 }\,dx
+ C(\ep) \int_{B^2_1 \setminus B^2_\ep} |u_m -u|^2 \,dx \\
&\le  \( \log \frac{a}{\ep} \)^{-2} \int_{B^2_1} |\nabla (u_m -u) |^2 \,dx
+ C(\ep) \int_{B^2_1 \setminus B^2_\ep} |u_m -u|^2 \,dx,
\end{align*}
where the second inequality comes from the Hardy inequality under spherical average zero (see e.g. Proposition \ref{Prop bdd} in \S \ref{S App}). 
Letting $m \to \infty$ and $\ep \to 0$, we see that $u_m \to u$ in $L^2 \( B_1^2 ; |x|^{-2} \( \log \frac{a}{|x|} \)^{-2}\,dx \)$. 
Hence the embedding $\( H_{0, {\rm rad}}^1(B_1^2) \)^{\perp} \hookrightarrow L^2 \( B_1^2 ; |x|^{-2} (\log \frac{a}{|x|})^{-2}\,dx \)$ is compact. 
Next, we assume that $q >2$. Let 
$\{ u_m \}_{m=1}^\infty \subset \( H_{0, {\rm rad}}^1(B_1^2) \)^{\perp}$ be a bounded sequence and $u_m \rightharpoonup u$ in $\( H_{0, {\rm rad}}^1(B_1^2) \)^{\perp}$. From the case where $q=2$, we have $u_m \to u$ in $L^2 \( B_1^2 ; |x|^{-2} \( \log \frac{a}{|x|} \)^{-2}\,dx \)$. Let $1<p <\infty$. By the H\"older inequality, we have
\begin{align*}
\int_{B_1^2} \frac{|u_m -u|^q}{|x|^2 (\log \frac{a}{|x|})^{\frac{q}{2}+1}} \,dx 
\le \( \int_{B_1^2} \frac{|u_m -u|^2}{|x|^2 (\log \frac{a}{|x|})^2} \,dx \)^{\frac{1}{p}}
\( \int_{B_1^2} \frac{|u_m -u|^r}{|x|^2 (\log \frac{a}{|x|})^{\tilde{r}}} \,dx \)^{1-\frac{1}{p}},
\end{align*}
where $r= \frac{p}{p-1} \( q- \frac{2}{p} \) = q + \frac{q-2}{p-1} >2, \tilde{r} = \frac{p}{p-1} \( 1+ \frac{q}{2} -\frac{2}{p} \)$. Note that $\tilde{r} = \frac{r}{2} + 1$. By the embedding $H_0^{1} (B_1^2) \hookrightarrow L^r \( B_1^2 ; |x|^{-2} \( \log \frac{1}{|x|} \)^{-1-\frac{r}{2}}\,dx \)$, we have
\begin{align*}
\int_{B_1^2} \frac{|u_m -u|^q}{|x|^2 (\log \frac{a}{|x|})^{\frac{q}{2}+1}} \,dx 
\le \( \int_{B_1^2} \frac{|u_m -u|^2}{|x|^2 (\log \frac{a}{|x|})^2} \,dx \)^{\frac{1}{p}}
C \( \int_{B_1^2} |\nabla (u_m - u) |^2 \,dx \)^{\frac{r}{2} \( 1-\frac{1}{p} \)}
\to 0,
\end{align*}
as $m \to \infty$. 
Therefore, $\( H_{0, {\rm rad}}^1(B_1^2) \)^{\perp} \hookrightarrow L^q \( B_1^2 ; |x|^{-2} (\log \frac{a}{|x|})^{-1-\frac{q}{2}}\,dx \)$ is compact for any $q \ge 2$. 
\end{proof}

\begin{proof}{\it (Corollary \ref{Cor cpt})}
Although it is already shown by the scaling argument (see e.g.  \cite{HK,S(JDE)}), we write the proof here for reader's convenience. Since $H_{0, {\rm rad}}^{1} (B_1^2) \subset H_0^1 (B_1^2)$, it is enough to show the non-compactness of the embedding $H_{0, {\rm rad}}^{1} (B_1^2) \hookrightarrow L^2 \( B_1^2 ; |x|^{-2} \( \log \frac{a}{|x|} \)^{-2}\,dx \)$. Let $u \in C_{c, {\rm rad}}^\infty (B_1^2) \setminus \{ 0\}$. Consider $u_{\la} (x) = \la^{-\frac{1}{2}} \,u (y), \, y = \( \frac{|x|}{a} \)^{\la -1} x$ for $\la \le 1$. Note that supp $u_\la \subset B_{a^{1-\frac{1}{\la}}} \subset B^2_1$ for $\la \le 1$. Let $|x|=r, |y|=s$. Since $\frac{s}{a} =\frac{r^\la}{a^\la}$ and $\,r \, \frac{ds}{dr} = \la s$, for any $\la \le 1$ we have
\begin{align*}
\int_{B_1^2} | \nabla u_\la |^2 \,dx 
&= 2\pi \int_0^{a^{1-\frac{1}{\la}}} \left| \frac{d u_\la}{dr} (r) \right|^2 r \,dr \\
&=2\pi \la^{-1} \int_0^{1} \left| \frac{d u}{ds} (s) \right|^2 r \( \frac{ds}{dr} \) \,ds
=\int_{B_1^2} | \nabla u |^2 \,dy < \infty,\\
\int_{B_1^2} \frac{|u_\la|^q}{|x|^2 (\log \frac{a}{|x|})^{1+ q/2}} \,dx
&= 2\pi \int_0^{a^{1-\frac{1}{\la}}} \frac{|u_\la (r)|^q}{(\log \frac{a}{r})^{1+q/2}} \,\frac{dr}{r}\\
&= 2\pi \la^{-\frac{q}{2}} \int_0^{1} \frac{|u (s)|^q}{(\log \frac{a}{r})^{1+q/2}} \,\frac{ds}{\la s}\\
&= 2\pi \la^{-\frac{q}{2}} \int_0^{1} \frac{|u (s)|^q}{\( \frac{1}{\la} \log \frac{a}{s} \)^{1+q/2}} \,\frac{dr}{\la s}
=\int_{B_1^2} \frac{|u|^q}{|y|^2 (\log \frac{a}{|y|})^{1+q/2}} \,dy >0.
\end{align*}
Especially, if we choose $\la = \frac{1}{m} \,(m \in \N)$, then we see that $\{ u_{\frac{1}{m}} \}_{m=1}^\infty \subset H_{0, {\rm rad}}^1(B_1^2)$ is a non-compact sequence. In fact, we see that $u_{\frac{1}{m}} \rightharpoonup 0$ in $H_{0, {\rm rad}}^1(B_1^2)$ and $u_{\frac{1}{m}} \not\to 0$ in $L^q \( B_1^2 ; |x|^{-2} \( \log \frac{a}{|x|} \)^{-1-\frac{q}{2}}\,dx \)$. Therefore, the embedding $H_{0, {\rm rad}}^{1} (B_1^2) \hookrightarrow L^q \( B_1^2 ; |x|^{-2} \( \log \frac{a}{|x|} \)^{-1-\frac{q}{2}}\,dx \)$ is non-compact. 
\end{proof}


Next, we show Corollary \ref{Cor A_a q=2}. 

\begin{proof}{\it (Corollary \ref{Cor A_a q=2})}
Consider the following test function.
\begin{align*}
u_{\alpha} (x) = g_2 (\theta) \, f_{\alpha} (r) \,\,\( \alpha > \frac{1}{2} \),\,\text{where}\,\,
f_a (r) = 
\begin{cases}
2 (\log 2)^{\alpha} r \quad &\text{if}\,\, r \in [0, \frac{1}{2}],\\
\( \log \frac{1}{r} \)^{\alpha} \, &\text{if} \,\, r \in (\frac{1}{2}, 1),
\end{cases}
\end{align*}
and $g_2 = g_2 (\theta)$ satisfies
\begin{align*}
\int_0^{2\pi} g_2 (\theta) \,d\theta =0, \,\,\int_0^{2\pi} | {g_2}' (\theta)|^2 \,d\theta = \int_0^{2\pi} |g_2 (\theta)|^2 \,d\theta.
\end{align*}
Then we have
\begin{align*}
A_1 \le \frac{\int_{B^2_1} \left|  \nabla u_{\alpha} \right|^2 \,dx}{\int_{B_1^2} \frac{|u_\alpha|^2}{|x|^2 (\log \frac{1}{|x|})^2}\,dx}
&\le \frac{\int_{1/2}^1 \int_0^{2\pi} \left| \alpha \( \log \frac{1}{r} \)^{\alpha -1} \frac{1}{r} \, \right|^2 |g_2(\theta)|^2 r + \( \log \frac{1}{r} \)^{2\alpha} | {g_2}' (\theta)|^2 r^{-1} \,dr d\theta + C}{\int_{1/2}^{1} \int_0^{2\pi} \( \log \frac{1}{r} \)^{2\alpha -2} | g_2(\w)|^2 r^{-1} \,dr d\theta}\\
&=\alpha^2 +  \frac{\int_{1/2}^1 \( \log \frac{1}{r} \)^{2\alpha} r^{-1} \,dr  + \tilde{C}}{\int_{1/2}^{1} \( \log \frac{1}{r} \)^{2\alpha -2} r^{-1} \,dr} \\
&=\alpha^2 +  \frac{\int_0^{\log 2} t^{2\alpha}  \,dt  + \tilde{C}}{\int_0^{\log 2} t^{2\alpha -2} \,dt}
= \frac{1}{4} + o(1) \quad \( \alpha \to \frac{1}{2} \). 
\end{align*}
Since $B_1=\frac{1}{4}$ is not attained, $A_1 = \frac{1}{4}$ is not attained.

Assume that $a>1$. Let 
$\{ u_m \}_{m=1}^\infty \subset \( H_{0, {\rm rad}}^1(B_1^2) \)^{\perp}$ be a minimizing sequence of $A_a$ satisfying
\begin{align*}
\int_{B_1^2} \frac{|u_m|^2}{|x|^2 (\log \frac{a}{|x|})^2}\,dx =1, \quad \int_{B_1^2} |\nabla u_m |^2 \,dx = A_a + o(1)\,\, (m \to \infty).
\end{align*}
Since $\{ u_m \}$ is bounded, in the same way as the proof of Theorem \ref{Thm cpt}, we have $u_m \rightharpoonup u$ in $\( H_{0, {\rm rad}}^1(B_1^2) \)^{\perp}$. By Theorem \ref{Thm cpt}, we have 
\begin{align*}
&u_m \to u\,\, \text{in}\,\, L^2 \( B_1^2 ; |x|^{-2} \( \log \frac{a}{|x|} \)^{-2}\,dx \), \\
&\int_{B_1^2} \frac{|u|^2}{|x|^2 (\log \frac{a}{|x|})^2}\,dx= \lim_{m \to \infty} \int_{B_1^2} \frac{|u_m|^2}{|x|^2 (\log \frac{a}{|x|})^2}\,dx =1, \\
&\int_{B_1^2} |\nabla u |^2 \,dx \le \liminf_{m \to \infty} \int_{B_1^2} |\nabla u_m |^2 \,dx = A_a.
\end{align*}
Therefore, $u$ is a minimizer of $A_a$. Since $A_a$ is attained for $a>1$, we see that $A_a > \frac{1}{4} = B_a$. 
\end{proof}

%
%
\section{Compactness of the radial Sobolev spaces: Improvement of the compactness result in \cite{ES}}\label{S radial}

Let $N \ge 3, 2^* = \frac{2N}{N-2}, 1 \le p < \infty, L^{p,q} (\re^N)$ be the Lorentz space which is given by
\begin{align*}
L^{p,q}(\re^N) &= \left\{ \, u: \re^N \to \re\,\,{\rm measurable} \,\,\biggr| \,\, \| u\|_{p,q} < \infty  \, \right\}, \\
\| u \|_{p,q} &= 
\begin{cases}
\( \int_0^{\infty}  \( s^{\frac{1}{p}} u^* (s) \)^q \,\frac{ds}{s} \)^{\frac{1}{q}} \quad &\text{if} \,\, 1\le q <\infty, \vspace{0.5em}\\
\sup_{s \in (0, \infty)} s^{\frac{1}{p}}  u^* (s)   &\text{if} \,\, q = \infty,
\end{cases}
\end{align*}
$u^*: [0, \infty) \to [0, \infty]$ denotes the decreasing rearrangement of $u$ and $u^{\#}: \re^N \to [0, \infty]$ denotes the Schwartz symmetrization of $u$ which are given by
\begin{align*}
u^{*}(t) &= \inf \left\{ \la > 0 \,\, \Biggl| \,\, \left| \{ x \in \re^N \,\, | \,\, |u(x)| > \la \} \right| \le t \right\},\\
u^{\#} (x) =u^{\#}(|x|) &= \inf \left\{ \la > 0 \,\, \Biggl| \,\, \left| \{ x \in \re^N \,\, | \,\, |u(x)| > \la \} \right| \le |B^N_{|x|}| \right\}
\end{align*}
(Ref. \cite{BS,Lieb-Loss}). Obviously, we have
\begin{align*}
u^* ( t ) = u^{\#} (r), \,\text{where} \,\, t = |B^N_{|x|}| = \frac{\w_{N-1}}{N} r^N. 
\end{align*}

The authors \cite{ES} obtained the following compactness result in the framework of Lebesgue spaces.

\begin{ThmB}(\cite{ES} Corollary 1)
\begin{align*} 
(i) \,\, &H^{1}_{\rm rad} (\re^N) \cap L^p (\re^N ) \hookrightarrow L^2 (\re^N) \,\,\text{is compact for}\, p \in [1,2).\\
(ii) \,\, &H^{1}_{\rm rad} (\re^N) \cap L^p (\re^N ) \hookrightarrow L^{2^*} (\re^N) \,\,\text{is compact for}\, p >2^*.
\end{align*}
\end{ThmB}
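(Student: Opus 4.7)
The plan is to treat both parts by the same scheme: take a bounded sequence $\{u_m\} \subset H^1_{\rm rad}(\re^N) \cap L^p(\re^N)$, extract a weakly convergent subsequence $u_m \rightharpoonup u$ (using reflexivity when $p > 1$, or a Dunford--Pettis/truncation argument in the case $p = 1$ in part (i)), and establish strong convergence in the target space by decomposing $\re^N$ into an interior ball, an annular bulk, and an exterior region. Two tools do essentially all the work. The first is Rellich--Kondrachov compactness on bounded domains. The second is the pointwise Strauss radial estimate
\begin{align*}
|v(x)| \le C_N |x|^{-\frac{N-1}{2}} \|v\|_{L^2}^{1/2} \|\nabla v\|_{L^2}^{1/2}, \qquad x \ne 0,
\end{align*}
valid for radial $v \in H^1(\re^N)$, which transfers the $H^1$ bound into uniform pointwise decay along the sequence.

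For part (i), I would split $\re^N = B_R \cup (\re^N \setminus B_R)$. On $B_R$, Rellich--Kondrachov gives $u_m \to u$ strongly in $L^2(B_R)$. On the exterior, Strauss yields $\|u_m - u\|_{L^\infty(\re^N \setminus B_R)} \le C R^{-(N-1)/2}$ uniformly in $m$, and interpolating with the uniform $L^p$ bound via $|u_m - u|^2 = |u_m - u|^{2-p} |u_m - u|^p$ produces
\begin{align*}
\int_{|x| > R} |u_m - u|^2 \, dx \le C R^{-\frac{(N-1)(2-p)}{2}} \, \|u_m - u\|_{L^p}^p,
\end{align*}
which vanishes as $R \to \infty$ thanks to $2 - p > 0$. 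Sending $m \to \infty$ first and $R \to \infty$ afterwards concludes the compactness.

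For part (ii), I would use a tripartite decomposition $\re^N = B_\delta \cup (B_R \setminus B_\delta) \cup (\re^N \setminus B_R)$. On the exterior, Strauss alone handles the integrand: raising the pointwise estimate to the $2^*$ power and integrating in polar coordinates gives $\int_{|x|>R} |v|^{2^*}\,dx \le C R^{-N/(N-2)}$ uniformly for $v$ bounded in $H^1_{\rm rad}$, so this piece is small for large $R$. On the small ball, H\"older with conjugate exponents $p/2^*$ and $p/(p-2^*)$ (which requires $p > 2^*$) yields
\begin{align*}
\int_{B_\delta} |v|^{2^*} \, dx \le \|v\|_{L^p}^{2^*} \, |B_\delta|^{1 - 2^*/p},
\end{align*}
which vanishes as $\delta \to 0$ uniformly in $m$. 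On the bulk $B_R \setminus B_\delta$, Rellich--Kondrachov gives $u_m \to u$ in $L^q$ for any $q < 2^*$, and the standard log-convex interpolation $\|u_m-u\|_{L^{2^*}} \le \|u_m-u\|_{L^q}^{\theta} \|u_m-u\|_{L^p}^{1-\theta}$ with $1/2^* = \theta/q + (1-\theta)/p$ upgrades this to strong convergence in $L^{2^*}(B_R \setminus B_\delta)$. Assembling the three pieces gives $u_m \to u$ in $L^{2^*}(\re^N)$.

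The main obstacle I anticipate is uniformity: each of the three estimates in (ii) must depend only on the $H^1$ or $L^p$ bound of $u_m$, not on $m$ or $u$, so that $\delta$ can be fixed small and $R$ large \emph{before} the Rellich step is used to send $m \to \infty$ on the bulk. This is automatic here, but it is precisely the step that becomes delicate when the $L^p$ bound is replaced by a weak $L^{p,\infty}$ bound in the extension to Lorentz spaces, because the H\"older inequality near the origin must be replaced by its Hardy--Lorentz analogue and because radial decreasing rearrangement, which preserves the $L^{p,\infty}$ norm, does not preserve the $H^1$ norm.
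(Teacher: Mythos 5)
Your argument is correct, but it takes a genuinely different route from the proof the paper records for Theorem B. The paper's proof (attributed to \cite{ES} and spelled out at the end of \S\ref{S radial}) is a two-step deduction: Strauss's compactness (Theorem A) gives $u_m \to u$ strongly in $L^q(\re^N)$ for a fixed $q \in (2,2^*)$, and the \emph{global} interpolation inequality $\| v \|_2 \le \| v\|_q^{\la}\| v\|_p^{1-\la}$ (resp. $\| v \|_{2^*} \le \| v\|_q^{\la}\| v\|_p^{1-\la}$), applied to $v = u_m - u$ whose $L^p$ norm stays bounded, immediately upgrades this to strong convergence in $L^2$ (resp. $L^{2^*}$). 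You instead rebuild the compactness from scratch via the interior/annulus/exterior decomposition plus the pointwise radial estimate; this is essentially the scheme the paper deploys not for Theorem B itself but for its Lorentz-space generalization, Theorem \ref{Thm cpt G} (there with an extra Schwarz-symmetrization step that you do not need, since you stay inside the radial class). The trade-off is exactly the one you identify at the end: your decomposition argument survives the replacement of $L^p$ by $L^{p,\infty}$, where the naive H\"older inequality fails (cf.\ (\ref{Holder muri})) and the paper must substitute Proposition \ref{Prop Holder}, whereas the paper's route is shorter and uses Strauss's theorem as a black box rather than reproving its mechanism.

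One small correction for $p=1$ in part (i): Dunford--Pettis is neither available (no uniform integrability is assumed) nor needed. All your exterior estimate requires is that $u \in L^1(\re^N)$ with $\sup_m \| u_m - u\|_{L^1} < \infty$, and this follows from a.e.\ convergence of a subsequence together with Fatou's lemma; the bound $\int_{|x|>R}|u_m-u|^2\,dx \le \| u_m - u\|_{L^{\infty}(\re^N \setminus B_R)}\, \| u_m - u\|_{L^1}$ then goes through unchanged.
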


\begin{remark}
Actually, we do not need the radially symmetry in (i). Namely, $H^{1}(\re^N) \cap L^p (\re^N ) \hookrightarrow L^2 (\re^N)$ is compact for $p \in [1, 2)$. 
\end{remark}

In this section, we show the following compactness result in the framework of Lorentz spaces.

\begin{theorem}\label{Thm cpt Lorentz}
Let $q \in [1, \infty]$. Then the followings hold. 
\begin{align*} 
(i) \,\, &H^{1}_{\rm rad} (\re^N) \cap L^{p, q} (\re^N ) \hookrightarrow L^2 (\re^N) \,\,\text{is compact for}\, p \in [1,2).\\
(ii) \,\, &H^{1}_{\rm rad} (\re^N) \cap L^{p, q} (\re^N ) \hookrightarrow L^{2^*} (\re^N) \,\,\text{is compact for}\, p >2^*.
\end{align*}
\end{theorem}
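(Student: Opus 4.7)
The plan is to reduce to the weak Lorentz case $q=\infty$ and then combine Strauss's radial compactness (Theorem A) with a Lyapunov-type interpolation. Since $L^{p,q}(\re^N) \hookrightarrow L^{p,\infty}(\re^N)$ continuously for every $q\in[1,\infty]$, it suffices to prove both compactness statements under the assumption that $\{u_m\}_{m=1}^\infty$ is bounded in $H^1_{\rm rad}(\re^N) \cap L^{p,\infty}(\re^N)$.

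Given such a bounded sequence, I would first extract a subsequence with $u_m \rightharpoonup u$ in $H^1_{\rm rad}(\re^N)$. Rellich on each ball together with a diagonal argument yields a further subsequence converging a.e.\ in $\re^N$, and Fatou applied to the distribution function $t \mapsto |\{|u_m|>t\}|$ gives $\|u\|_{p,\infty} \le \liminf_m \|u_m\|_{p,\infty}$, so $\{u_m - u\}$ remains bounded in $L^{p,\infty}(\re^N)$. By Theorem A, $u_m \to u$ strongly in $L^s(\re^N)$ for every $s \in (2, 2^*)$.

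The key analytical step is a Lyapunov inequality for weak Lorentz spaces: if $v \in L^{p,\infty}(\re^N) \cap L^s(\re^N)$ with $p \ne s$ and $r$ strictly between $p$ and $s$, then
\begin{align*}
\|v\|_r \le C\,\|v\|_{p,\infty}^{1-\theta}\,\|v\|_s^{\theta}, \qquad \frac{1}{r} = \frac{1-\theta}{p} + \frac{\theta}{s},
\end{align*}
which follows from $\|v\|_r^r = r\int_0^\infty \lambda^{r-1}|\{|v|>\lambda\}|\,d\lambda$ together with the elementary bound $|\{|v|>\lambda\}| \le \min(\|v\|_{p,\infty}^p \lambda^{-p},\,\|v\|_s^s \lambda^{-s})$, by splitting the integral at the level where the two bounds coincide. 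For part (i) with $p\in[1,2)$, any $s \in (2,2^*)$ places $r=2$ strictly between $p$ and $s$; for part (ii) with $p>2^*$, any $s \in (2,2^*)$ places $r=2^*$ strictly between $s$ and $p$. Applying the inequality to $v=u_m-u$ gives
\begin{align*}
\|u_m - u\|_r \le C\,\|u_m-u\|_{p,\infty}^{1-\theta}\,\|u_m-u\|_s^{\theta} \longrightarrow 0,
\end{align*}
because the first factor is uniformly bounded and the second tends to zero by Strauss's compactness.

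The principal obstacle I expect is purely technical: the passage from strong $L^p$ (the setting of Theorem B in \cite{ES}) to weak $L^{p,\infty}$ requires that both the lower semicontinuity of $\|\cdot\|_{p,\infty}$ under a.e.\ limits and the Lyapunov interpolation be established directly from the distribution function rather than from an $L^p$ norm. Both survive that reformulation in a routine way, after which the rest of the argument is a short two-line interpolation that handles the vanishing endpoint in (i) and the concentration endpoint in (ii) by the same mechanism.
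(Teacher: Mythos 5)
Your proposal is correct, and it is essentially the \emph{alternative} proof that the paper itself sketches at the end of Section \ref{S radial}: your Lyapunov inequality $\|v\|_r \le C\|v\|_{p,\infty}^{1-\theta}\|v\|_s^{\theta}$ is (a one-sided strengthening of) Proposition \ref{Prop Holder}, and the paper notes there that, armed with it, Theorem \ref{Thm cpt G} ``can be shown in the same way as the proof of Theorem B,'' i.e.\ exactly by your two-line interpolation between Strauss's compactness in $L^s$, $s\in(2,2^*)$, and uniform boundedness in $L^{p,\infty}$. The paper's \emph{displayed} proof takes a different route: it passes through the more general group-invariant setting $H^1_G$, symmetrizes via P\'olya--Szeg\"o to reduce to a radial sequence $u_m^{\#}\rightharpoonup 0$, and then splits $\int |u_m^{\#}|^r$ by hand --- Rellich on an annulus or ball for the compact part, the radial decay estimate (\ref{Radial lemma}) for the tail in case (ii), and the pointwise bound $u_m^*(t)\le \|u_m\|_{p,\infty}t^{-1/p}$ on the dangerous region (near $t=\infty$ for (i), near $t=0$ for (ii)). Your route is shorter and makes transparent that the \emph{same} interpolation mechanism kills both the vanishing and the concentration endpoint; the paper's route is what permits the generalization to $H^1_G$ with $G=O(N_1)\times\cdots\times O(N_k)$, where one cannot directly invoke the radial lemma. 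Two small points you should make explicit: (a) since $L^{p,\infty}$ is only quasi-normed (relevant at $p=1$), the boundedness of $\|u_m-u\|_{p,\infty}$ uses the quasi-triangle inequality together with your Fatou argument showing $u\in L^{p,\infty}$; (b) the derivation of the interpolation inequality from $\|v\|_r^r = r\int_0^\infty \lambda^{r-1}|\{|v|>\lambda\}|\,d\lambda$ requires checking that the two pieces converge at the correct ends ($\lambda^{r-1-p}$ integrable at $0$ and $\lambda^{r-1-s}$ at $\infty$ when $p<r<s$, and the reverse split when $s<r<p$), which indeed holds in both cases (i) and (ii). With those details written out, your argument is complete.
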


\begin{remark}\label{Rem L^{p,q}}
Since $L^p \subsetneq L^{p, q} \subsetneq L^{p, \infty}$ for any $q \in (p, \infty)$, Theorem \ref{Thm cpt Lorentz} is an improvement of Theorem B. 
\end{remark}

\begin{remark}
Set $u_m (x) = m^{\frac{N-2}{2}} u\( mx \)$ for $x \in \re^N, m \in \N$ and $u \in C_{c, {\rm rad}}^\infty (\re^N)$. 
Direct calculation implies that 
\begin{align*}
\| u_m \|_{p,q} = m^{\frac{N-2}{2} - \frac{N}{p}} \| u\|_{p,q} \to \infty \,\,\text{as} \,\, m\to \infty \,\, \text{if} \,\, p > 2^*.
\end{align*}
Therefore, we can 
deny the possibility of non-compactness with respect to $\dot{H}^1$ and $L^{2^*}$-invariance under the restriction of $H^1(\re^N)$ to $H^1(\re^N) \cap L^{p,q} (\re^N)$ 
when $p > 2^*$. 
\end{remark}

Let $G$ be a closed subgroup of the orthogonal group $O(N)$. 
We call $u(x)$ a $G$ {\it invariant function} if $u(gx) = u(x)$ for all $g \in G$ and $x \in \re^N$. Set 
$$H_G^1 (\re^N) = \{ \,u \in H^1 (\re^N) \,\,| \,\, u\,\,\text{is}\,\,G \,\,\text{invariant} \,\}.$$
Clearly, we see that $H^1_{\rm rad} (\re^N) = H^1_{O(N)} (\re^N)$. The following result is a generalization of Theorem A to $H^1_G (\re^N)$.
\begin{ThmC}(\cite{Li}, see also \cite{W} \S 1.5)
Let $N_j \ge 2, j=1,2, \cdots, k, \sum_{j=1}^k N_j = N$ and 
\begin{align*}
G := O(N_1) \times O(N_2) \times \cdots \times O(N_k). 
\end{align*}
Then the embedding $H^{1}_G (\re^N)  \hookrightarrow L^p (\re^N)$ is compact for $p \in (2, 2^*)$.
\end{ThmC}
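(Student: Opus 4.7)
Our strategy is to verify, for a weakly null bounded sequence in $H^1_G(\re^N)$, the hypothesis of P.-L. Lions' vanishing lemma, exploiting $G$-invariance through an orbit-packing argument.

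Let $\{u_m\}_{m=1}^\infty \subset H^1_G(\re^N)$ be bounded. Since $H^1_G$ is a closed subspace of the reflexive space $H^1(\re^N)$, after passing to a subsequence we have $u_m \rightharpoonup u$ weakly in $H^1_G$. Replacing $u_m$ by $u_m - u$, it suffices to prove that $u_m \to 0$ strongly in $L^p(\re^N)$ whenever $u_m \rightharpoonup 0$ in $H^1_G$. By Lions' vanishing lemma, it is then enough to show that for some $R > 0$,
$$\sup_{y \in \re^N} \int_{B_R(y)} |u_m|^2 \, dx \longrightarrow 0 \quad (m \to \infty).$$

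\emph{Near-origin region.} For any fixed $M > R$, the local Rellich--Kondrachov compactness gives $u_m \to 0$ in $L^2(B^N_{M+R})$, hence $\sup_{|y| \le M}\int_{B_R(y)} |u_m|^2\, dx \to 0$.

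\emph{Far region (main step).} For $|y| > M$, decompose $y = (y_1, \dots, y_k) \in \re^{N_1} \times \cdots \times \re^{N_k}$ and select $j = j(y)$ with $|y_j| \ge |y|/\sqrt{k}$. Let $H_j \subset G$ be the subgroup acting only on the $j$-th factor. Since $N_j \ge 2$, the $H_j$-orbit of $y$ is a round $(N_j - 1)$-sphere of radius $|y_j|$, and a standard sphere-packing argument produces $g_1, \dots, g_Q \in H_j$ with
$$Q \ge c_{N_j}\left(\frac{|y_j|}{R}\right)^{N_j - 1}$$
and the Euclidean balls $B_R(g_\ell y)$ pairwise disjoint (valid once $|y_j| \gg R$). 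Each $g_\ell$ acts as a Euclidean isometry and preserves $u_m$ by $G$-invariance, so
$$\int_{B_R(g_\ell y)} |u_m|^2\, dx = \int_{B_R(y)} |u_m|^2\, dx, \qquad \ell = 1, \dots, Q,$$
and summing with disjointness,
$$Q \int_{B_R(y)} |u_m|^2\, dx \le \|u_m\|_{L^2(\re^N)}^2 \le C.$$
Setting $\nu := \min_j (N_j - 1) \ge 1$ and using $|y_j| \ge M/\sqrt{k}$, this yields
$$\sup_{|y| \ge M} \int_{B_R(y)} |u_m|^2\, dx \le C(R)\, M^{-\nu}.$$

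\emph{Conclusion.} Given $\varepsilon > 0$, first choose $M$ so that $C(R) M^{-\nu} < \varepsilon/2$, then take $m$ large so the near-origin supremum is $< \varepsilon/2$. This verifies the vanishing hypothesis, and Lions' lemma delivers $u_m \to 0$ in $L^p(\re^N)$ for every $p \in (2, 2^*)$.

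\emph{Main obstacle.} The entire argument rests on producing $Q \gtrsim (|y_j|/R)^{N_j - 1}$ disjoint $G$-translates of $B_R(y)$. This is where the hypothesis $N_j \ge 2$ enters decisively: $O(1) = \{\pm 1\}$ would yield only two translates and no quantitative decay. The rate $M^{-\nu}$ with $\nu = \min_j(N_j - 1)$ is the product-group analogue of Strauss's radial decay $r^{-(N-1)/2}$, applied through the largest coordinate $|y_j|$ afforded by the pigeonhole choice of $j(y)$.
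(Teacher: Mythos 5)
Your proof is correct and is essentially the argument behind the paper's citation: the paper gives no proof of Theorem C itself, deferring to Lions \cite{Li} and Willem \cite{W} \S 1.5, and Willem's proof there runs exactly as yours does — verify the hypothesis of the $L^2$-vanishing lemma by producing, for $|y|$ large, many pairwise disjoint $G$-translates of $B_R(y)$ (his ``compatibility'' condition that the packing number $m(y,r)\to\infty$ as $|y|\to\infty$), then invoke local Rellich--Kondrachov on the bounded region. Your orbit-packing bound $Q \gtrsim (|y_j|/R)^{N_j-1}$, obtained via the pigeonhole choice of the block with $|y_j|\ge |y|/\sqrt{k}$, is a correct quantitative form of that compatibility and rightly identifies $N_j\ge 2$ as the hypothesis that makes the orbit a positive-dimensional sphere.
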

In the same way, we can generalize Theorem \ref{Thm cpt Lorentz} to $H^1_G (\re^N)$.
\begin{theorem}\label{Thm cpt G}
Let $q \in [1, \infty], N_j \ge 2, j=1,2, \cdots, k, \sum_{j=1}^k N_j = N$ and 
\begin{align*}
G := O(N_1) \times O(N_2) \times \cdots \times O(N_k). 
\end{align*}Then the followings hold. 
\begin{align*} 
(i) \,\, &H^{1}_G (\re^N) \cap L^{p, q} (\re^N ) \hookrightarrow L^2 (\re^N) \,\,\text{is compact for}\, p \in [1,2).\\
(ii) \,\, &H^{1}_G (\re^N) \cap L^{p, q} (\re^N ) \hookrightarrow L^{2^*} (\re^N) \,\,\text{is compact for}\, p >2^*.
\end{align*}
\end{theorem}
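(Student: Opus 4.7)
The plan is to mimic the proof of Theorem \ref{Thm cpt Lorentz}, replacing its one-line use of Strauss's radial pointwise decay by the analogous decay for $G$-invariant functions due to P.-L.~Lions (see \cite{W}~\S1.5, which is also the decay underlying Theorem C). Writing $x = (x^{(1)}, \ldots, x^{(k)}) \in \re^{N_1} \times \cdots \times \re^{N_k}$ and $r_j = |x^{(j)}|$, every $u \in H^{1}_G(\re^N)$ depends only on $(r_1, \ldots, r_k)$, and applying Strauss's one-dimensional argument in each radial block (legitimate precisely because $N_j \geq 2$) gives
\begin{equation*}
|u(x)| \leq C\, \|u\|_{H^1(\re^N)} \prod_{j=1}^{k} r_j^{-(N_j-1)/2}
\end{equation*}
away from the coordinate subspaces $\{r_j = 0\}$. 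In particular, on any cone $D_\delta = \{x : r_j \geq \delta|x|\text{ for all }j\}$ this specializes to $|u(x)| \leq C(\delta)\|u\|_{H^1}|x|^{-(N-k)/2}$ as $|x| \to \infty$, the direct analog of Strauss (and exactly Strauss when $k=1$).

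Part (i) is actually independent of symmetry (cf.\ Remark~2): for $p \in [1,2)$ the interpolation $\|f\|_2 \leq \|f\|_{p,q}^{\theta}\|f\|_{2^*}^{1-\theta}$ combined with Rellich compactness $H^1 \hookrightarrow L^2_{\mathrm{loc}}$ and the $L^{p,q}$ bound already forces $u_m \to u$ in $L^2(\re^N)$ whenever $u_m \rightharpoonup u$ in $H^1 \cap L^{p,q}$, so I would simply copy the proof of Theorem \ref{Thm cpt Lorentz}(i). For (ii), given $u_m \rightharpoonup u$ in $H^{1}_G \cap L^{p,q}$, the plan is to split
\begin{equation*}
\int_{\re^N}|u_m - u|^{2^*}\,dx = I_1 + I_2 + I_3,
\end{equation*}
with $I_1 = \int_{\{|u_m-u|>A\}}$, $I_2 = \int_{\{|u_m-u|\leq A\}\cap D_\delta \cap \{|x|>R\}}$, and $I_3$ the rest. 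The peak part $I_1$ vanishes as $A\to\infty$ uniformly in $m$ via the elementary Lorentz estimate $\int_{\{|f|>A\}}|f|^{2^*}\,dx \leq C\|f\|_{p,\infty}^{p}A^{2^*-p}$, valid since $p>2^*$. The conical tail $I_2$ vanishes as $R\to\infty$ by combining the Lions decay with $H^1\hookrightarrow L^2$ through $|u_m-u|^{2^*} \leq \|u_m-u\|_{L^\infty(D_\delta\cap\{|x|>R\})}^{2^*-2}|u_m-u|^2$. The piece of $I_3$ on $\{|x|\leq R\}$ is killed by Rellich compactness $H^1(B_R)\hookrightarrow L^{s}(B_R)$ for $s<2^*$, after having removed peaks via $I_1$.

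The main obstacle, invisible in the radial case $k=1$, is the \emph{near-axis tail} of $I_3$, namely $(\re^N\setminus D_\delta)\cap\{|x|>R\}\cap\{|u_m-u|\leq A\}$, which has infinite Lebesgue measure and on which the Lions pointwise bound degenerates. The delicate step is to control it by decomposing $\re^N\setminus D_\delta$ into its $k$ conical neighborhoods $\{r_{j_0}<\delta|x|,\ r_j\geq \delta|x|\ \forall j\neq j_0\}$ and exploiting, on each, the sharper Lions bound $|u(x)|\leq C(\delta)\,r_{j_0}^{-(N_{j_0}-1)/2}|x|^{-\alpha}$ for some $\alpha>0$, integrated against the weighted Jacobian $r_{j_0}^{N_{j_0}-1}$; the assumption $N_{j_0}\geq 2$ enters here both to legitimize the one-dimensional Strauss argument per block and to ensure convergence of the resulting integral in $r_{j_0}$ near $\{r_{j_0}=0\}$, while the equi-integrability of $\{|u_m|^{2^*}\}$ inherited from the $L^{p,\infty}$-bound (for $p>2^*$ one has $\int_E |u_m|^{2^*}\,dx \lesssim |E|^{1-2^*/p}$ uniformly in $m$) handles the large-value contribution. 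Choosing parameters in the order $A\to\infty$, $R\to\infty$, $\delta\to 0$, and finally $m\to\infty$ then closes the argument.
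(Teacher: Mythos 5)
Your three-way splitting for (ii) (large values controlled by the $L^{p,\infty}$ quasi-norm, a compact region handled by Rellich, a far tail handled by pointwise decay) is structurally the same as what the paper does in the radial case, but the estimate your tail argument rests on is false for $k\ge 2$: there is no bound of the form $|u(x)|\le C\|u\|_{H^1(\re^N)}\prod_{j}r_j^{-(N_j-1)/2}$ for $u\in H^1_G(\re^N)$, not even away from the coordinate subspaces. A $G$-invariant $u$ is a function $\tilde u(r_1,\dots,r_k)$ of $k$ variables, and $\|u\|^2_{H^1(\re^N)}$ is comparable to $\int (|\nabla\tilde u|^2+|\tilde u|^2)\prod_j r_j^{N_j-1}\,dr_1\cdots dr_k$; for $k\ge2$ this is an $H^1$-norm in dimension $k\ge 2$ with a weight that is bounded above and below on unit cubes away from the axes, and $H^1$ in dimension $\ge2$ does not control $L^\infty$. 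Concretely, for $k=2$, $N_1=N_2=2$, take $\tilde u$ supported in the unit disc around $(R,R)$ with a truncated logarithmic spike, so that $\tilde u=1$ on a set of positive measure while $\|\tilde u\|_{H^1(\re^2)}=\ep$ is arbitrarily small; then $\|u\|_{H^1(\re^4)}\approx R\ep$ and your bound would force $|u|\le C R\ep\cdot R^{-1/2}R^{-1/2}=C\ep$ on the cone $D_{1/2}$, contradicting $|u|=1$ there. The one-dimensional Strauss argument "per block" does not iterate: the pointwise decay is genuinely special to $k=1$. This invalidates your estimate of $I_2$ via $\|u_m-u\|_{L^\infty(D_\delta\cap\{|x|>R\})}$ and the entire near-axis discussion built on the "sharper Lions bound"; what Lions actually supplies is an integral (annulus) decay, i.e.\ Theorem C, not a pointwise one.

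The paper sidesteps all of this with a symmetrization trick that your proposal is missing. Set $u_m=v_m-v$ and pass to the Schwarz rearrangement $u_m^{\#}$: it is radial, it has the same $L^s$ and $L^{p,\infty}$ norms as $u_m$ by equimeasurability, and it is bounded in $H^1$ by P\'{o}lya--Szeg\"{o}. Theorem C (the only place the group $G$ is used) gives $u_m\to0$ in $L^s$ for $s\in(2,2^*)$, which forces $u_m^{\#}\rightharpoonup0$ in $H^1_{\rm rad}(\re^N)$, and then the genuine radial pointwise decay (\ref{Radial lemma}) applied to $u_m^{\#}$, together with $\int_{B_\ep}|u_m^{\#}|^{2^*}\,dx=\int_0^{|B_\ep|}|u_m^*(t)|^{2^*}\,dt\le\|u_m\|_{p,\infty}^{2^*}\int_0^{|B_\ep|}t^{-2^*/p}\,dt$, closes case (ii). The same remark applies to your part (i): Rellich plus the interpolation $\|f\|_2\le\|f\|_{p,q}^{\theta}\|f\|_{2^*}^{1-\theta}$ cannot conclude, because you have no strong convergence in $L^{2^*}$ when $p<2$ and no control of $\int_{|x|>R}|u_m-u|^2$ for a general non-rearranged function; the paper gets the tail from $\int_{\re^N\setminus B_R}|u_m^{\#}|^2\,dx=\int_{|B_R|}^{\infty}|u_m^*(t)|^2\,dt\le C R^{N(1-2/p)}$, which again uses the rearrangement. (Alternatively, Proposition \ref{Prop Holder} gives $\|w\|_{2}\le C\|w\|_{s}^{\la}\|w\|_{p,\infty}^{1-\la}$ for $p<2<s<2^*$ and $\|w\|_{2^*}\le C\|w\|_{s}^{\la}\|w\|_{p,\infty}^{1-\la}$ for $2<s<2^*<p$, which combined with Theorem C yields a two-line proof; note that the correct interpolation partner is $L^{s}$ with $s\in(2,2^*)$, where Theorem C provides strong convergence, not $L^{2^*}$.)
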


Since $H^1_{\rm rad} (\re^N) \subset H^1_G (\re^N)$, Theorem \ref{Thm cpt Lorentz} follows from Theorem \ref{Thm cpt G}.
We show Theorem \ref{Thm cpt G} only in the case where $q = \infty$, see Remark \ref{Rem L^{p,q}}.

\begin{proof}{\it (Theorem \ref{Thm cpt G})}
Let $\{ v_m \}_{m=1}^\infty \subset H^{1}_G (\re^N) \cap L^{p, \infty} (\re^N )$ be a bounded sequence and $v_m \rightharpoonup v$ in $H^{1}_G (\re^N) \cap L^{p, \infty} (\re^N )$. 
Set $u_m = v_m - v$. Then $u_m \rightharpoonup 0$ in $H^{1}_G (\re^N)$. From Theorem C, we have $u_m \to 0$ in $L^q (\re^N)$ for $q \in (2, 2^*)$.
By the P\'{o}lya-Szeg\"{o} inequality, we have
\begin{align*}
\| u^{\#}_m \|^2_{H^1 (\re^N)} = \| \nabla u^{\#}_m \|_{L^2 (\re^N)}^2 + \| u_m^{\#} \|_{L^2 (\re^N)}^2
\le \| \nabla u_m \|_{L^2 (\re^N)}^2 + \| u_m \|_{L^2 (\re^N)}^2 
= \| u_m \|^2_{H^1 (\re^N)} < \infty
\end{align*}
which implies that $\{ u_m^{\#} \}_{m=1}^\infty$ be the bounded sequence in $H^{1}_{\rm rad} (\re^N)$. Therefore, there exists $u \in H^{1}_{\rm rad} (\re^N)$ such that $u_m^{\#} \rightharpoonup u$ in $H^{1}_{\rm rad} (\re^N)$. From Theorem A,  $u_m^{\#} \to u$ in $L^q (\re^N)$ for $q \in (2, 2^*)$. Since 
\begin{align*}
0 = \lim_{m \to \infty} \| u_m \|_q = \lim_{m \to \infty} \| u_m^{\#} \|_q = \| u \|_q,
\end{align*}
we have $u \equiv 0$. Therefore, $u_m^{\#} \rightharpoonup 0$ in $H^{1}_{\rm rad} (\re^N)$.

\noindent
(i) Let $p<2$. Since $H^1 (B_R^N) \hookrightarrow L^2 (B^N_R)$ is compact, we have $u_m^{\#} \to 0$ in $L^2(B_R^N)$. Then, we have
\begin{align*}
\int_{B_R^N} |u_m^{\#} |^2 \,dx &=: C_1 (m, R) \to 0 \quad \text{as} \,\, m \to \infty\,\,\text{for fixed}\,\, R>0.
\end{align*}
On the other hand, 
\begin{align*}
\int_{\re^N} |u_m |^2 \,dx 
&=\int_{\re^N} |u_m^{\#} |^2 \,dx \\
&=\int_{\re^N \setminus B_R^N} |u_m^{\#} |^2 \,dx  + C_1 (m, R) \\
&=\w_{N-1} \int_R^\infty |u_m^{\#} (r) |^2 r^{N-1} \,dr  + C_1 (m, R) \\
&=\int_R^\infty |u_m^* (t) |^2 \,dt  + C_1 (m, R) \\
&\le \| u_m \|^2_{p, \infty} \int_R^\infty t^{-\frac{2}{p}} \,dt + C_1 (m, R) 
= C R^{-\frac{2}{p} +1} + C_1 (m, R).
\end{align*}
Since $p < 2$, 
\begin{align*}
\lim_{R \to \infty} \lim_{m \to \infty} \int_{\re^N} |u_m |^{2} \,dx 
= 0
\end{align*}
which implies that $v_m \to v$ in $L^2 (\re^N)$. 
Hence, the embedding $H^{1}_G (\re^N) \cap L^{p, \infty} (\re^N ) \hookrightarrow L^2 (\re^N)$ is compact for $p \in [1, 2)$.

\noindent
(ii) Let $p>2^*$. First, we recall the following pointwise estimate for any radial function $f$.
\begin{align}\label{Radial lemma}
| f(x) |\le \sqrt{\frac{2}{\w_{N-1}} } \,\,\| f \|^{\frac{1}{2}}_{2} \,\| \nabla f \|^{\frac{1}{2}}_2 \, |x|^{-\frac{N-1}{2}} \,\,\text{a.e. in} \,\, x \in \re^N
\end{align}
See e.g. \cite{HS} Proposition 6. 
By (\ref{Radial lemma}), we have
\begin{align}\label{soto1}
\int_{\re^N \setminus B^N_R} |u_m^{\#}|^{2^*} \,dx 
&\le C \, \| u_m^{\#} \|_{H^1 (\re^N)}^{2^*} \int_{\re^N \setminus B^N_R} |x|^{-\frac{N(N-1)}{N-2}} \,dx \le C R^{-\frac{N}{N-2}}. 
\end{align}
By the compactness of the one-dimensional Sobolev embedding $H^1 (\ep, R)  \hookrightarrow L^s (\ep, R)$ for any $s \ge 1$ and the equivalence of $H^1_{{\rm rad}} (B^N_R \setminus B^N_\ep)$ and $H^1 ((\ep, R) ; r^{N-1} \,dr) \simeq H^1 (\ep, R)$, we have the compactness of the embedding $H^1_{\rm rad} (B^N_R \setminus B^N_\ep) \hookrightarrow L^s (B^N_R \setminus B^N_\ep)$ for any $s \ge 1$ and for any fixed $R > \ep >0$. Therefore, we have
\begin{align}\label{aida1}
\int_{B^N_R \setminus B^N_\ep} |u_m^{\#}|^{2^*} \,dx =: C_2 (m, \ep, R) \to 0 \quad \text{as} \,\, m \to \infty\,\,\text{for fixed}\,\, R, \ep >0. 
\end{align}
By (\ref{soto1}) and (\ref{aida1}), we have 
\begin{align*}
\int_{\re^N} |u_m |^{2^*} \,dx 
&= \int_{\re^N} |u_m^{\#} |^{2^*} \,dx \\
&\le \int_{B_\ep^N} |u_m^{\#} |^{2^*} \,dx  + C_2 (m, \ep, R) + C R^{-\frac{N}{N-2}} \\
&=\int_0^\ep |u_m^* (t) |^{2^*} \,dt  + C_2 (m, \ep, R) + C R^{-\frac{N}{N-2}} \\
&\le \| u_m \|^{2^*}_{p, \infty} \int_0^\ep t^{-\frac{2^*}{p}} \,dt + C_2 (m, \ep, R) + C R^{-\frac{N}{N-2}} \\
&= C \ep^{-\frac{2^*}{p} +1} + C_2 (m, \ep, R) + C R^{-\frac{N}{N-2}}.
\end{align*}
Since $p > 2^*$, 
\begin{align*}
\lim_{\ep \to 0} \lim_{R \to \infty} \lim_{m \to \infty} \int_{\re^N} |u_m |^{2^*} \,dx 
= 0
\end{align*}
which implies that $v_m \to v$ in $L^{2^*} (\re^N)$. 
Hence, the embedding $H_G^{1} (\re^N) \,\cap \, L^{p, \infty}(\re^N) \hookrightarrow L^{2^*} (\re^N)$ is compact for $p > 2^*$.  
\end{proof}

In the end of this section, we give remarks about H\"older inequalities and interpolation inequalities, and give another proof of Theorem \ref{Thm cpt G}. 
Theorem B was shown in \cite{ES} by using Theorem A and the H\"older inequality: $\| f g \|_r \le \| f \|_q \| g \|_p$, where $\frac{1}{r} = \frac{1}{p} + \frac{1}{q}$. Especially, they used the following interpolation inequalities by setting $f=|u|^\la, g=|u|^{1-\la}$. 
\begin{align*}
&{\rm (i)} \,\,\| u \|_2 \le \| u\|_q^\la \, \|u \|_p^{1-\la} \quad \( \,p < 2 < q < 2^*, \la = \frac{q (2-p)}{2 (q-p)} \, \)\\
&{\rm (ii)} \,\,\| u \|_{2^*} \le \| u\|_q^\la \, \|u \|_p^{1-\la} \quad \( \,2 < q < 2^* < p, \la = \frac{q (2^* -p)}{2^* (q-p)} \,  \)
\end{align*}
In fact, if we assume that $u_m \rightharpoonup u$ in $H^{1}_{\rm rad} (\re^N) \cap L^p (\re^N )$, then we have $u_m \to u$ in $L^q (\re^N)$ from Theorem A. By using the interpolation inequalities above, we obtain $u_m \to u$ in $L^2$ in the case (i) and  $u_m \to u$ in $L^{2^*} (\re^N)$ in the case (ii). Therefore, (i) and (ii) in Theorem B hold. 
However, in the weak-Lebesgue spaces $L^{p, \infty}(\re^N)$, the H\"older  inequality does not hold in general. In fact, the following will be proved
\begin{align}\label{Holder muri}
{\not\exists}C >0\,\, \text{s.t.}\,\,\| f g \|_2 \le C \| f\|_q \, \|g \|_{p, \infty} \,\, \({\forall}f \in L^q (\re^N), \, {\forall}g \in L^{p, \infty} (\re^N) \),
\end{align}
where $p < 2 < q < 2^*$. Indeed, if we consider 
\begin{align*}
f(x) = |x|^{-\alpha} I_{B_1(0)} (x), \, g(x) = |x|^{-\frac{N}{p}}, \, \alpha = \frac{N}{q} -\ep,\, \ep >0,
\end{align*}
then
\begin{align*}
&\| f \|_{q} =  \( \int_{B_1 (0)} |x|^{-q \alpha}\,dx \)^{\frac{1}{q}} 
=\( \int_{B_1 (0)} |x|^{-N+ q \ep}\,dx \)^{\frac{1}{q}} = C \ep^{-\frac{1}{q}}, \\
&\| g \|_{p, \infty} < C,\\
&\| f g \|_{2} =  \( \int_{B_1 (0)} |x|^{-2\alpha -\frac{2N}{p}}\,dx \)^{\frac{1}{2}} 
=  \( \int_{B_1 (0)} |x|^{-N + 2\ep}\,dx \)^{\frac{1}{2}} = C \ep^{-\frac{1}{2}}.
\end{align*}
Therefore, 
\begin{align*}
C \ep^{-\frac{1}{2}} = \| f g \|_{2}  \le C \| f \|_{q} \| g \|_{p, \infty} = C \ep^{-\frac{1}{q}} 
\end{align*}
which implies (\ref{Holder muri}) when we take $\ep \to 0$. 
However, in spite of (\ref{Holder muri}), we can show the following interpolation inequalities from Proposition \ref{Prop Holder} below.
\begin{align*}
&{\rm (i)} \,\,\| u \|_2 \le C \,\| u\|_q^\la \, \|u \|_{p, \infty}^{1-\la} \quad \( \,p < 2 < q < 2^*, \la \in (0,1) \, \)\\
&{\rm (ii)} \,\,\| u \|_{2^*} \le C \,\| u\|_q^\la \, \|u \|_{p, \infty}^{1-\la} \quad \( \,2 < q < 2^* < p, \la \in (0,1) \,  \)
\end{align*}
As a consequence, thanks to Proposition \ref{Prop Holder}, we can show Theorem \ref{Thm cpt G} in the same way as the proof of Theorem B.

\begin{prop}\label{Prop Holder}
Let $1 \le p < q < r \le \infty$. Then the interpolation inequality 
\begin{align*}
\,\,\| u \|_q \le D\,\| u\|_{p, \infty}^\la \, \|u \|_{r, \infty}^{1-\la}, 
\end{align*}
holds for any $u \in L^{p, \infty} (\re^N) \cap L^{r, \infty} (\re^N)$, where $D= \( \frac{q(r-p )}{(r-q)(q-p)} \)^{\frac{1}{q}}$ and $\la = \frac{p(r-q)}{q (r-p)}$. 
\end{prop}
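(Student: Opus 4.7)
The plan is to reduce the inequality to an elementary one-dimensional optimization via the decreasing rearrangement $u^*$. The starting point is the layer-cake identity $\|u\|_q^q = \int_0^\infty (u^*(t))^q\,dt$ (for $q<\infty$), together with the two pointwise bounds $u^*(t) \le \|u\|_{p,\infty}\,t^{-1/p}$ and $u^*(t) \le \|u\|_{r,\infty}\,t^{-1/r}$ which follow directly from the definition of the weak-Lebesgue norms given in the paper.

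Since $p<r$, these two bounds coincide at a single crossover point $t_0 = (\|u\|_{p,\infty}/\|u\|_{r,\infty})^{pr/(r-p)}$; on $(0,t_0)$ the $r$-bound is the smaller one, and on $(t_0,\infty)$ the $p$-bound is smaller. I would therefore split the integral at $t_0$ and use the $r$-bound on the left piece and the $p$-bound on the right piece. The hypothesis $p<q<r$ ensures that both resulting elementary integrals $\int_0^{t_0} t^{-q/r}\,dt$ and $\int_{t_0}^\infty t^{-q/p}\,dt$ converge, producing a clean two-term upper bound for $\|u\|_q^q$ in terms of $t_0$ and the two weak norms.

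Next I would substitute the explicit value of $t_0$ into that bound. The exponent bookkeeping gives $q - p(r-q)/(r-p) = r(q-p)/(r-p)$ and $q - r(q-p)/(r-p) = p(r-q)/(r-p)$, which shows that the two summands become equal (as they must be at the optimal split), with common value $\|u\|_{p,\infty}^{p(r-q)/(r-p)}\|u\|_{r,\infty}^{r(q-p)/(r-p)}$. The remaining prefactors add up to $\frac{r}{r-q}+\frac{p}{q-p}=\frac{q(r-p)}{(r-q)(q-p)}=D^q$, and taking the $q$-th root yields the stated inequality with precisely $\lambda = p(r-q)/(q(r-p))$ and $1-\lambda=r(q-p)/(q(r-p))$.

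For the endpoint $r=\infty$ I would run the same argument using $u^*(t)\le\|u\|_\infty$ in place of $u^*(t)\le\|u\|_{r,\infty}t^{-1/r}$; the crossover becomes $t_0=(\|u\|_{p,\infty}/\|u\|_\infty)^p$, the left integral reduces to $\|u\|_\infty^q\,t_0$, and the resulting constant is $q/(q-p)$, in agreement with the $r\to\infty$ limit of $D^q$. No real obstacle is anticipated: the entire proof is a routine optimization of a sum of two monotone tails, and the only care required is in the algebra verifying that the optimum at $t_0$ produces exactly the constant $D$ stated in the proposition.
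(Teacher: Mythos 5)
Your proposal is correct and follows essentially the same route as the paper: bound $\|u\|_q^q=\int_0^\infty (u^*)^q\,dt$ by splitting the integral and applying the $r$-tail bound on the left piece and the $p$-tail bound on the right. The only cosmetic difference is that you place the split at the crossover point $t_0$ from the outset, whereas the paper splits at an arbitrary $s>0$ and then minimizes $As^a+Bs^{-b}$ --- the minimizer is exactly your $t_0$, so the computations and the constant $D$ agree.
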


\begin{proof}
For any $s >0$, we have
\begin{align*}
\| u \|_q^q &= \int_0^s |u^* (t)|^q \,dt + \int_s^\infty |u^* (t)|^q \,dt\\
&\le \| u \|_{r, \infty}^q \int_0^s t^{-\frac{q}{r}} \,dt + \| u \|_{p, \infty}^q \int_s^\infty t^{-\frac{q}{p}} \,dt 
=  A s^a + B s^{-b},
\end{align*}
where
\begin{align*}
A= \frac{r}{r-q} \| u \|_{r, \infty}^q,\,\, a= \frac{r-q}{r}, \,\, B= \frac{p}{q-p} \| u \|_{p, \infty}^q,\,\, b= \frac{q-p}{p}.
\end{align*} 
Note that the function $f(s) =  A s^a + B s^{-b}$ attains its minimum at $s= \( \frac{Bb}{Aa} \)^{\frac{1}{a+b}}$. Since $ \( \frac{Bb}{Aa} \)^{\frac{1}{a+b}} = \( \frac{\| u \|_{p, \infty}}{\| u \|_{r, \infty}} \)^{\frac{pr}{r-p}}$, we have
\begin{align*}
\| u \|_q^q 
\le \frac{r}{r-q} \| u \|_{r, \infty}^q \( \frac{\| u \|_{p, \infty}}{\| u \|_{r, \infty}} \)^{\frac{p(r-q)}{r-p}} + \frac{p}{q-p} \| u \|_{p, \infty}^q \( \frac{\| u \|_{p, \infty}}{\| u \|_{r, \infty}} \)^{-\frac{r(q-p)}{r-p}}
= D^q\,\| u\|_{p, \infty}^{q\la} \, \|u \|_{r, \infty}^{q(1-\la )}. 
\end{align*}
\end{proof}

%
%
\section{Appendix}\label{S App}

First, we give a proof of two Hardy type inequalities under spherical average zero on the ball. 
 
\begin{prop}\label{Prop bdd}
For any $u \in C_0^\infty (B_1^N)$ with $\int_{\mathbb{S}^{N-1}} u(r\w) \,dS_\w =0 \,(\forall r \ge 0)$, the following inequalities hold.
\begin{align}\label{H_2}
&\frac{N^2}{4} \int_{B_1^N} \frac{|u|^2}{|x|^2}\,dx \le \int_{B^N_1} \left|  \nabla u  \right|^2 \,dx\\
\label{CH_2}
&\frac{5}{4} \int_{B^2_1} \frac{|u|^2}{|x|^2 (\log \frac{e}{|x|})^2}\,dx \le \int_{B^2_1} \left|  \nabla u  \right|^2 \,dx
\end{align}
Moreover, the best constant $\frac{N^2}{4}$ in (\ref{H_2}) is not attained. 
\end{prop}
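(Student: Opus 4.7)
The plan is to decompose $u$ into spherical harmonics so that both inequalities reduce to one-dimensional weighted estimates on each angular mode. Write $u(r\w) = \sum_{k=1}^{\infty}\sum_{j} u_{k,j}(r) Y_{k,j}(\w)$, where $\{Y_{k,j}\}_j$ is an $L^2(\Sh^{N-1})$-orthonormal basis of degree-$k$ spherical harmonics with eigenvalues $\la_k = k(k+N-2)$; the $k=0$ mode is absent thanks to the spherical average zero hypothesis. Orthogonality on $\Sh^{N-1}$ yields
\begin{align*}
\int_{B_1^N}|\nabla u|^2\,dx = \sum_{k\ge1,\,j}\int_0^1\left(|u_{k,j}'|^2 + \frac{\la_k}{r^2}|u_{k,j}|^2\right) r^{N-1}\,dr,
\end{align*}
together with analogous identities for the weighted $L^2$ norms on the left sides of (\ref{H_2})--(\ref{CH_2}), so it suffices to prove the inequality mode by mode.

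For (\ref{H_2}) I would first establish the classical radial Hardy identity via the substitution $u_{k,j}(r) = r^{-(N-2)/2}w(r)$: a direct expansion gives
\begin{align*}
\int_0^1 |u_{k,j}'|^2 r^{N-1}\,dr - \frac{(N-2)^2}{4}\int_0^1 |u_{k,j}|^2 r^{N-3}\,dr = \int_0^1 r|w'|^2\,dr \ge 0,
\end{align*}
with the boundary terms vanishing because $u_{k,j}(1)=0$ and $w(r)=r^{(N-2)/2}u_{k,j}(r)\to 0$ as $r\to0$. Since $\la_k \ge \la_1 = N-1 = \tfrac{N^2 - (N-2)^2}{4}$ for every $k\ge 1$, adding $\la_k \int_0^1 |u_{k,j}|^2 r^{N-3}\,dr$ to both sides gives mode $k$ the constant $\tfrac{(N-2)^2}{4}+\la_k \ge \tfrac{N^2}{4}$, and summation proves (\ref{H_2}).

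For (\ref{CH_2}) in $N=2$ we have $\la_k = k^2$, and I would apply the analogous substitution $u_k(r) = \sqrt{\log(e/r)}\,\phi(r)$. A direct computation (with boundary terms vanishing since $\log(e/1)=1$, $u_k(1)=0$, and $\phi(r)=u_k(r)/\sqrt{\log(e/r)}\to 0$ as $r\to 0$) yields
\begin{align*}
\int_0^1 r|u_k'|^2\,dr - \frac{1}{4}\int_0^1 \frac{|u_k|^2}{r(\log(e/r))^2}\,dr = \int_0^1 r\log(e/r)|\phi'|^2\,dr \ge 0.
\end{align*}
Since $\log(e/r)\ge 1$ on $(0,1]$, the angular term satisfies $k^2 \int_0^1 \tfrac{|u_k|^2}{r}\,dr \ge \int_0^1 \tfrac{|u_k|^2}{r(\log(e/r))^2}\,dr$ for every $k\ge 1$, contributing $1$ to the constant and yielding total $\tfrac14 + 1 = \tfrac54$ on each mode.

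Non-attainment of $N^2/4$ in (\ref{H_2}) follows from rigidity: if a nontrivial $u$ gave equality, then for each $k\ge 2$ the strict bound $\la_k>\la_1$ (forcing constant $>N^2/4$ on that mode) would require $u_{k,j}\equiv 0$, while for $k=1$ equality in the radial Hardy identity requires $w'\equiv 0$, hence $w\equiv 0$ by the endpoint values, hence $u_{1,j}\equiv 0$ for all $j$; thus $u\equiv 0$, a contradiction. The main technical care needed is only in justifying the vanishing of boundary terms in the two substitutions, which is routine once one uses $u\in C_0^\infty(B_1^N)$ near $\partial B_1$ and the polynomial/logarithmic decay of the weights near the origin.
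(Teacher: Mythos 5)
Your spherical-harmonics route is sound for the two inequalities and for non-attainment, and it is genuinely different from the paper's argument. The paper keeps $u$ whole: it splits $|\nabla u|^2$ into the radial derivative $|\nabla u\cdot\frac{x}{|x|}|^2$ plus the angular part, applies the strict classical (resp.\ critical) Hardy inequality (\ref{standard H_2}) to the former and the Poincar\'e inequality on $\mathbb{S}^{N-1}$ (with eigenvalue $N-1$) to the latter, and adds the constants $\frac{(N-2)^2}{4}+(N-1)=\frac{N^2}{4}$; for (\ref{CH_2}) it likewise adds $\frac14$ from the critical Hardy inequality to the full angular contribution $\int_{B_1^2}|u|^2/|x|^2\,dx\ge\int_{B_1^2}|u|^2/(|x|^2(\log\frac{e}{|x|})^2)\,dx$. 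Your mode-by-mode version proves the same one-dimensional Hardy inequalities by hand via the ground-state substitutions, and the boundary terms do vanish as you claim (the coefficients $u_{k,j}$ are $O(r^k)$ at the origin and vanish at $r=1$); what it buys you is a clean equality analysis, whereas the paper gets non-attainment from the strictness of (\ref{standard H_2}).

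There is, however, a genuine gap: the proposition asserts that $\frac{N^2}{4}$ is the \emph{best} constant in (\ref{H_2}), and you never prove this. Your rigidity argument only shows that equality with the constant $\frac{N^2}{4}$ is never achieved --- a statement that would be equally true (and uninformative) if the inequality actually held with a strictly larger constant. Roughly half of the paper's proof is devoted to optimality, via test functions of the form $g_2(\omega)f(r)$ with $g_2$ a first nontrivial eigenfunction of $-\Delta_{\mathbb{S}^{N-1}}$: for $N=2$, $f_a(r)=r^a$ near the origin with $a\to0$ gives Rayleigh quotient $a^2+N-1+o(1)\to\frac{N^2}{4}$, and for $N\ge3$ a truncation of $r^{-(N-2)/2}-1$ gives $\frac{(N-2)^2}{4}+N-1+o(1)=\frac{N^2}{4}+o(1)$. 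In your framework this amounts to restricting to the $k=1$ mode, where your identity reads
\begin{align*}
\int_0^1\Bigl(|u'|^2+\tfrac{N-1}{r^2}|u|^2\Bigr)r^{N-1}\,dr=\frac{N^2}{4}\int_0^1|u|^2r^{N-3}\,dr+\int_0^1 r|w'(r)|^2\,dr,
\end{align*}
and exhibiting $w$ with $w(0)=w(1)=0$ for which $\int_0^1 r|w'|^2\,dr\big/\int_0^1 r^{-1}|w|^2\,dr$ is arbitrarily small (e.g.\ $w(r)=r^a$ on $[0,\frac12]$ suitably extended, $a\downarrow0$, exactly the paper's choice). Without some such explicit construction the claim that $\frac{N^2}{4}$ is sharp remains unproved.
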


\begin{proof}
We use the polar coordinate:
\begin{align*}
x=r\w \,(r=|x|, \w \in \Sh^{N-1}),\quad \nabla u(x) = \( \frac{\pd u}{\pd r}(r\w) \) \w + \frac{1}{r^2} \nabla_{\Sh^{N-1}} u(r \w)
\end{align*}
From the classical Hardy inequality:
\begin{align}\label{standard H_2}
\( \frac{N-2}{2} \)^2 \int_{B_1^N} \frac{|u|^2}{|x|^2}\,dx < \int_{B^N_1} \left|  \nabla u \cdot \frac{x}{|x|} \right|^2 \,dx \quad ({\forall}u \in C_0^\infty (B_1^N))
\end{align}
and the Poincar\'e inequality on the sphere $\Sh^{N-1}$:
\begin{align}\label{Poincare}
(N-1) \int_{\Sh^{N-1}} |g(\w)|^2 \,dS_\w \le \int_{B^N_1} \left|  \nabla_{\Sh^{N-1}} g \right|^2 \,dx \,\,({\forall}g \in C^\infty (\Sh^{N-1}), \,\int_{\Sh^{N-1}} g \,dS_\w =0 ),
\end{align}
for any $u \in C_0^\infty (B_1^N)$ with $\int_{\mathbb{S}^{N-1}} u(r\w) \,dS_\w =0 \,(\forall r \ge 0)$ we have
\begin{align*}
\int_{B^N_1} \left|  \nabla u  \right|^2 \,dx
&= \int_{B^N_1} \left|  \nabla u \cdot \frac{x}{|x|} \right|^2 \,dx 
+ \int_0^1 \int_{\Sh^{N-1}} \left|  \nabla_{\Sh^{N-1}} u(r\w)  \right|^2 r^{N-3}\,dr dS_\w \\
&> \( \frac{N-2}{2} \)^2 \int_{B_1^N} \frac{|u|^2}{|x|^2}\,dx + (N-1) \int_0^1 \int_{\Sh^{N-1}} \left|  u(r\w)  \right|^2 r^{N-3}\,dr dS_\w \\
&= \frac{N^2}{4} \int_{B_1^N} \frac{|u|^2}{|x|^2}\,dx
\end{align*}
which implies the inequality (\ref{H_2}). It is enough to show the optimality of the constant $\frac{N^2}{4}$ in (\ref{H_2}). Consider the following test function.
\begin{align*}
u_a(x) = g_2 (\w) \, f_a (r) \,\,\( a > 0 \),\,\,\text{where}\,\,
f_a (r) = 
\begin{cases}
r^a \quad &\text{if}\,\, r \in [0, \frac{1}{2}],\\
\text{smooth} \, &\text{if} \,\, r \in (\frac{1}{2}, 1),\\
0 \, &\text{if} \,\, r \in [1, \infty)
\end{cases}
\end{align*}
and $g_2$ is the second eigenfunction of the Laplace-Beltrami operator $-\lap_{\Sh^{N-1}}$ which satisfies 
\begin{align*}
\int_{\Sh^{N-1}} g_2 (\w) \,dS_\w =0, \,\,\int_{\Sh^{N-1}} |\nabla_{\Sh^{N-1}} g_2 (\w)|^2 \,dS_\w = (N-1)\int_{\Sh^{N-1}} |g_2 (\w)|^2 \,dS_\w.
\end{align*}
Then, we have
\begin{align*}
\frac{\int_{B^N_1} \left|  \nabla u_a  \right|^2 \,dx}{\int_{B_1^N} \frac{|u_a|^2}{|x|^2}\,dx}
&\le \frac{\int_0^{1/2} \int_{\Sh^{N-1}} |a r^{a-1}|^2 |g_2(\w)|^2 r^{N-1} + r^{2a} |\nabla_{\Sh^{N-1}} g_2(\w)|^2 r^{N-3} \,dr dS_\w + C}{\int_0^{1/2} \int_{\Sh^{N-1}} r^{2a} | g_2(\w)|^2 r^{N-3} \,dr dS_\w}\\
&= \( a^2 + N-1 \) + \frac{C}{\int_0^{1/2} \int_{\Sh^{N-1}} r^{2a} | g_2(\w)|^2 r^{N-3} \,dr dS_\w}\\
&= \( a^2 + N-1 \) + R(a, N). 
\end{align*}
Since $R(a,N) \to 0$ as $a \to 0$, we can obtain the optimality of the constant $1=\frac{N^2}{4}$ when $N=2$. 
In the case where $N \ge 3$, consider the following test function.
\begin{align*}
v_m(x) = g_2 (\w) \, h_m (r),\,\,\text{where}\,\,
h_m (r) = 
\begin{cases}
0 \, \quad 
&\text{if} \,\, r \in [0, \frac{1}{2m}],\\
2 m \, (m^{\frac{N-2}{2}} -1) \( r - \frac{1}{2m} \) 
&\text{if}\,\, r \in (\frac{1}{2m}, \frac{1}{m}),\\
r^{-\frac{N-2}{2}} -1 \, &\text{if} \,\, r \in [\frac{1}{m}, 1].
\end{cases}
\end{align*}
Then we have
\begin{align*}
&\frac{\int_{B^N_1} \left|  \nabla v_m  \right|^2 \,dx}{\int_{B_1^N} \frac{|v_m|^2}{|x|^2}\,dx} \\
&\le \frac{\int_{1/m}^1 \int_{\Sh^{N-1}} \left| -\frac{N-2}{2} r^{-\frac{N}{2}} \right|^2 |g_2(\w)|^2 r^{N-1} +  \left| r^{-\frac{N-2}{2}} -1 \right|^2 |\nabla_{\Sh^{N-1}} g_2(\w)|^2 r^{N-3} \,dr dS_\w + C_1}{\int_{1/m}^{1} \int_{\Sh^{N-1}} \left| r^{-\frac{N-2}{2}} -1 \right|^2 | g_2(\w)|^2 r^{N-3} \,dr dS_\w}\\
&=\frac{\left[ \( \frac{N-2}{2} \)^2 + N-1 \right] \log m + C_1}{\log m + C_2}
= \frac{N^2}{4} + o(1) \quad (m \to \infty)
\end{align*}
which implies the optimality of the constant $\frac{N^2}{4}$ when $N \ge 3$. 

On the other hand, if we use the critical Hardy inequality:
\begin{align}\label{standard CH_2}
\frac{1}{4} \int_{B_1^2} \frac{|u|^2}{|x|^2 \( \log \frac{e}{|x|} \)^2}\,dx < \int_{B^2_1} \left|  \nabla u \cdot \frac{x}{|x|} \right|^2 \,dx \quad ({\forall}u \in C_0^\infty (B_1^2))
\end{align}
instead of the classical Hardy inequality, for $u \in C_0^\infty (B_1^N)$ with $\int_{\mathbb{S}^{N-1}} u(r\w) \,dS_\w =0 \,(\forall r \ge 0)$ we have
\begin{align*}
\int_{B^2_1} \left|  \nabla u  \right|^2 \,dx
&= \int_{B^2_1} \left|  \nabla u \cdot \frac{x}{|x|} \right|^2 \,dx 
+ \int_0^1 \int_{\Sh^{1}} \left|  \nabla_{\Sh^{1}} u(r\w)  \right|^2 r^{-1}\,dr dS_\w \\
&> \frac{1}{4} \int_{B_1^2} \frac{|u|^2}{|x|^2 \( \log \frac{e}{|x|} \)^2}\,dx + \int_0^1 \int_{\Sh^{1}} \left|  u(r\w)  \right|^2 r^{-1}\,dr dS_\w \\
&=  \frac{1}{4} \int_{B_1^2} \frac{|u|^2}{|x|^2 \( \log \frac{e}{|x|} \)^2}\,dx + \int_{B_1^2} \frac{|u|^2}{|x|^2}\,dx \ge \frac{5}{4} \int_{B_1^2} \frac{|u|^2}{|x|^2 \( \log \frac{e}{|x|} \)^2}\,dx
\end{align*}
which implies the inequality (\ref{CH_2}).
\end{proof}

Next, we give a simple proof of the Hardy inequality under spherical average zero on the whole space. For another proof, see e.g. \cite{BMO}.  

\begin{prop}\label{Prop R^N}
For any $u \in C_0^\infty (\re^N)$ with $\int_{\mathbb{S}^{N-1}} u(r\w) \,dS_\w =0 \,(\forall r \ge 0)$, the inequality  
\begin{align*}
&\frac{N^2}{4} \int_{\re^N} \frac{|u|^2}{|x|^2}\,dx \le \int_{\re^N} \left|  \nabla u  \right|^2 \,dx
\end{align*}
holds. Moreover, the best constant $\frac{N^2}{4}$ is not attained.
\end{prop}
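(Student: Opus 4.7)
The plan is to mimic the proof of Proposition~\ref{Prop bdd} essentially verbatim, replacing $B_1^N$ with $\re^N$. The argument rests on the same two ingredients: the classical Hardy inequality (applied to the radial part of the gradient) and the Poincar\'e inequality on the sphere (applied to the tangential part, where the spherical average zero condition pays off).

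First I would write, in polar coordinates $x = r\omega$,
\begin{equation*}
\int_{\re^N} |\nabla u|^2\,dx = \int_{\re^N} \Bigl| \nabla u \cdot \tfrac{x}{|x|} \Bigr|^2\,dx + \int_0^\infty \int_{\Sh^{N-1}} |\nabla_{\Sh^{N-1}} u(r\omega)|^2 \, r^{N-3}\,dr\,dS_\omega.
\end{equation*}
For the first term I apply the classical Hardy inequality on $\re^N$ (same constant $\bigl(\tfrac{N-2}{2}\bigr)^2$ as in \eqref{standard H_2}, extended from $B_1^N$ to $\re^N$ by the usual density/truncation argument, and strict on every nonzero $u \in C_0^\infty$; when $N=2$ this step is trivial since the constant is $0$). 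For the second term I apply the spherical Poincar\'e inequality \eqref{Poincare} to $u(r\cdot)$ at each fixed $r>0$, which is admissible because $\int_{\Sh^{N-1}} u(r\omega)\,dS_\omega = 0$ by hypothesis, and then integrate against $r^{N-3}\,dr$ to obtain $(N-1)\int_{\re^N} |u|^2/|x|^2\,dx$. Summing the two bounds and using the identity $\bigl(\tfrac{N-2}{2}\bigr)^2 + (N-1) = \tfrac{N^2}{4}$ yields the stated inequality.

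For sharpness of the constant, no new work is needed: the test functions $u_a$ (used for $N=2$) and $v_m$ (used for $N \ge 3$) already constructed in the proof of Proposition~\ref{Prop bdd} are compactly supported in $B_1^N$, hence they also lie in $C_0^\infty(\re^N)$ and realize the same ratios. For non-attainment, I would observe that the overall inequality is in fact strict for every nonzero admissible $u$. When $N \ge 3$, this is because the classical Hardy inequality on $\re^N$ is strict for every nonzero $u \in C_0^\infty(\re^N)$ (its formal extremal $|x|^{-(N-2)/2}$ does not belong to the admissible class). When $N = 2$, the first term above is $\int |\partial_r u|^2 \,dx \ge 0$, and equality there would force $u$ to be $r$-independent, contradicting compact support unless $u \equiv 0$. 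Combining the strict inequality for nonzero $u$ with sharpness of the constant shows that $\tfrac{N^2}{4}$ is not attained.

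The one subtlety — rather than a genuine obstacle — is the degenerate case $N=2$, where the Hardy part of the decomposition contributes nothing and the inequality must be extracted entirely from the spherical Poincar\'e inequality; one must then argue non-attainment separately from the strict positivity of $\int |\partial_r u|^2$ on compactly supported nonzero functions.
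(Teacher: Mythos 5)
Your proposal is correct, and the inequality itself is handled exactly as the paper intends: the paper's own proof of Proposition \ref{Prop R^N} does not even repeat that part, consisting solely of the optimality argument, since the decomposition into a radial Hardy term and a tangential spherical Poincar\'e term is verbatim the one from Proposition \ref{Prop bdd}. Where you genuinely diverge is in the sharpness of the constant. You recycle the origin-concentrating test functions $u_a$ and $v_m$ from the ball, using that $C_0^\infty(B_1^N)\subset C_0^\infty(\re^N)$ and that the Rayleigh quotient of a function supported in $B_1^N$ is the same whether computed over the ball or over $\re^N$; the paper instead builds a new test function $g_2(\w)f_a(r)$ with $f_a(r)=r$ on $[0,1]$ and $f_a(r)=r^{-a}$ on $(1,\infty)$, letting $a\downarrow\frac{N-2}{2}$, so that the quotient degenerates through slow decay at infinity rather than through a singularity at the origin. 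Both routes are valid: yours costs no new computation and produces genuinely compactly supported test functions, while the paper's single formula treats all $N\ge 2$ uniformly (avoiding your case split between $N=2$ and $N\ge 3$) and highlights that on the whole space the constant can also be saturated at infinity. Your non-attainment argument --- strictness of the radial Hardy inequality for $N\ge3$, and strict positivity of $\int|\partial_r u|^2$ for nonzero compactly supported $u$ when $N=2$ --- is in fact more explicit than the paper, which states it shows only optimality and leaves non-attainment to the strict inequalities already recorded in (\ref{standard H_2}) and (\ref{H_2}).
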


\begin{proof}
We show only the optimality of the constant $\frac{N^2}{4}$. Consider the following test function.
\begin{align*}
u_a(x) = g_2 (\w) f_a (r) \,\,\( a > \frac{N-2}{2} \),\,\text{where}\,\,
f_a (r) = 
\begin{cases}
r \quad &\text{if}\,\, r \in [0, 1],\\
r^{-a} \, &\text{if} \,\, r \in (1, \infty)
\end{cases}
\end{align*}
and $g_2$ is given by the proof of Proposition \ref{Prop bdd}. Then we have
\begin{align*}
\frac{\int_{\re^N} \left|  \nabla u_a  \right|^2 \,dx}{\int_{\re^N} \frac{|u_a|^2}{|x|^2}\,dx}
&\le \frac{\int_0^{1} r^{N-1}\,dr + \int_1^\infty |-a r^{-a-1}|^2 r^{N-1} + (N-1) \left[ \int_0^1 r^{N-1} + \int_1^\infty r^{-2a +N-3} \,dr \right] }{\int_1^\infty r^{-2a +N-3} \,dr}\\
&= \( a^2 + N-1 \) + \frac{1}{\int_1^\infty r^{-2a +N-3} \,dr}\\
&= \( a^2 + N-1 \) + R(a, N). 
\end{align*}
Since $R(a, N) \to 0$ as $a \to \frac{N-2}{2}$, we see that the constant $\frac{N^2}{4}$ is optimal. 
\end{proof}


Finally, we show the following result to show Theorem A in Introduction. 

\begin{prop}\label{Prop non-cpt}
Let $X$ be a Hilbert space, $Y$ be a Banach space, $A \subset X$ be a closed subspace and $X=A \oplus A^{\perp}$. If the embedding $X \hookrightarrow Y$ is non-compact and $A \hookrightarrow Y$ is compact, then $A^\perp \hookrightarrow Y$ is non-compact. 
\end{prop}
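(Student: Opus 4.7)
The plan is to argue by contradiction. Assume that, contrary to the claim, the embedding $A^{\perp} \hookrightarrow Y$ is compact, and deduce that the embedding $X \hookrightarrow Y$ must be compact as well, contradicting the hypothesis.

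The key algebraic input is the orthogonal decomposition $X = A \oplus A^{\perp}$. Given any bounded sequence $\{ u_m \}_{m=1}^{\infty} \subset X$, I would write uniquely $u_m = v_m + w_m$ with $v_m \in A$ and $w_m \in A^{\perp}$. Orthogonality gives $\| u_m \|_X^2 = \| v_m \|_X^2 + \| w_m \|_X^2$, so both $\{ v_m \}$ and $\{ w_m \}$ are bounded in $X$, hence bounded in $A$ and $A^{\perp}$ respectively (since $A$ is closed in $X$ and $A^{\perp}$ inherits the Hilbert structure).

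Now I apply the two compactness hypotheses in sequence. By compactness of $A \hookrightarrow Y$, extract a subsequence (still denoted $\{v_m\}$) such that $v_m \to v$ in $Y$ for some $v \in Y$. By the (assumed for contradiction) compactness of $A^{\perp} \hookrightarrow Y$, extract a further subsequence such that $w_m \to w$ in $Y$ for some $w \in Y$. Along this sub-subsequence $u_m = v_m + w_m \to v + w$ in $Y$. Thus every bounded sequence in $X$ admits a subsequence converging in $Y$, i.e.\ $X \hookrightarrow Y$ is compact, contradicting the hypothesis.

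There is essentially no analytic obstacle here; the only point to be careful about is the orthogonality estimate that keeps the components of a bounded sequence bounded, which is precisely where the Hilbert space structure of $X$ is used (a general Banach space direct sum would only give boundedness up to a constant, which still suffices, but the Hilbert setting makes it transparent).
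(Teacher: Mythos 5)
Your proof is correct and follows essentially the same route as the paper: argue by contradiction, split a bounded sequence via the orthogonal decomposition $X = A \oplus A^{\perp}$, and apply the two compactness hypotheses to the components. The only difference is that the paper takes a detour through reflexivity and weak limits (extracting $u_m \rightharpoonup u$ and identifying the strong limit with $u$ by uniqueness of weak limits), whereas you invoke sequential compactness of the two embeddings directly on successive subsequences; your version is slightly more economical and the extra weak-convergence machinery in the paper is not actually needed for the contradiction.
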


\begin{proof}
Assume that $A^{\perp} \hookrightarrow Y$ is compact. For any bounded sequence $\{ u_m \}_{m=1}^\infty \subset X$, there exist $\{ v_m \}_{m=1}^\infty \subset A$ and $\{ w_m \}_{m=1}^\infty \subset A^\perp$ such that $u_m = v_m + w_m,\, (v_m, w_m)_X  =0$ for any $m \in \N$. Since $X$ is reflexive and $X \hookrightarrow Y$, there exist $u \in X$ and a subsequence $\{ u_{m_j} \}_{j=1}^\infty$ (we use $\{ u_m \}$ again) such that $u_{m} \rightharpoonup u$ in $X, Y$. Moreover, since both sequences $\{ v_m \}_{m=1}^\infty \subset A, \{ w_m \}_{m=1}^\infty \subset A^\perp$ are also bounded and any closed subspace in reflexive Banach space is also reflexive (see e.g. \cite{B} Proposition 3.20), we have $v_m \rightharpoonup v$ in $A$ and $w_m \rightharpoonup w$ in $A^\perp$. Since both $A \hookrightarrow Y$ and $A^\perp \hookrightarrow Y$ are compact, we have $v_m \to v$ and $w_m \to w$ in $Y$. Therefore, we see that $u_m \to v+w$ in $Y$. Thanks to the uniqueness of the weak limit, we have $u= v+ w$ which implies that $u_m \to u$ in $Y$. This contradicts the non-compactness of the embedding $X \hookrightarrow Y$. Hence, the embedding $A^{\perp} \hookrightarrow Y$ is non-compact. 
\end{proof}


\section*{Acknowledgment}
The first author (S.M.) was supported by JSPS KAKENHI Grant-in-Aid for Scientific Research C 16K05191.
The second author (M.S.) was supported by JSPS KAKENHI Early-Career Scientists, No. JP19K14568. 
This work was partly supported by Osaka City University Advanced
Mathematical Institute (MEXT Joint Usage/Research Center on Mathematics
and Theoretical Physics).



\begin{thebibliography}{99}
\bibitem{AS}
Adimurthi, Sandeep, K., {\it Existence and non-existence of the first eigenvalue of the perturbed Hardy-Sobolev operator}, \newblock Proc. Roy. Soc. Edinburgh Sect. A \textbf{132} (2002), No.5, 1021-1043.



\bibitem{BS}
Bennett, C., Sharpley, R., {\it Interpolation of Operators}, 
Pure and Applied Mathematics, vol. 129, Boston Academic Press, Inc., (1988).

\bibitem{BMO}
Bez, N., Machihara, S., Ozawa, {\it Hardy type inequalities with spherical derivatives}, SN Partial Differ. Equ. Appl. (2020) 1:5. 

\bibitem{BL}
Birman, MSh, Laptev, A., {\it The negative discrete spectrum of a two-dimensional Schr\"odinger operator}, 
Commun. Pure Appl. Math. 49, 967-997 (1996). 

\bibitem{B}
Brezis, H., {\it Functional analysis, Sobolev spaces and partial differential equations}, 
Universitext. Springer, New York, 2011.


\bibitem{CM}
Caldiroli, P., Musina, R., {\it Rellich inequalities with weights}, Calc. Var. Partial Differential Equations 45 (2012), no. 1-2, 147-164.

\bibitem{CPR}
Chabrowski, J., Peral, I., Ruf, B., {\it On an eigenvalue problem involving the Hardy potential}, Commun. Contemp. Math. 12 (2010), no. 6, 953-975.

\bibitem{ES}
Ebihara, Y., Schonbek, T. P., 
{\it On the (non)compactness of the radial Sobolev spaces}, 
Hiroshima Math. J. 16 (1986), no. 3, 665-669.

\bibitem{EF}
Ekholm, T., Frank, R.L., {\it On Lieb-Thirring inequalities for Schr\"odinger operators with virtual level}, 
Commun. Math. Phys. 264, 725-740 (2006). 


\bibitem{FZZ}
Fan, X., Zhao, Y., Zhao, D.,
{\it Compact imbedding theorems with symmetry of Strauss-Lions type for the space 
$W^{1,p(x)}(\Omega)$. (English summary)}, 
J. Math. Anal. Appl. 255 (2001), no. 1, 333-348. 
g
\bibitem{HS}
Hashizume, M., Sano, M., {\it Strauss's radial compactness and nonlinear elliptic equation involving a variable critical exponent}, J. Funct. Spaces 2018, Art. ID 5497172, 13 pp.

\bibitem{H}
Hersch, J., {\it Transplantation harmonique, transplantation par modules, et th\'eor\`emes isop\'erim\'etriques. (French. English summary)}, 
Comment. Math. Helv. 44 (1969), 354-366.

\bibitem{HK}
Horiuchi, T., Kumlin, P., {\it On the Caffarelli-Kohn-Nirenberg-type inequalities involving critical and supercritical weights}, Kyoto J. Math. 52 (2012), no. 4, 661-742. 

\bibitem{II}
Ioku, N., Ishiwata, M., {\it A Scale Invariant Form of a Critical Hardy Inequality}, Int. Math. Res. Not. IMRN (2015), no. 18, 8830-8846.



\bibitem{Lieb-Loss}
Lieb, E., and Loss, M., 
{\it Analysis (second edition)}, 
\newblock Graduate Studies in Mathematics, 14, Amer. Math. Soc. Providence, RI, (2001), xxii+346 pp. 

\bibitem{Li}
Lions, P.-L.,
{\it Sym\'etrie et compacit\'e dans les espaces de Sobolev. (French. English summary) [Symmetry and compactness in Sobolev spaces]},
J. Funct. Anal. 49 (1982), no. 3, 315-334.



\bibitem{S(JDE)}
Sano, M., {\it Extremal functions of generalized critical Hardy inequalities}, J. Differential Equations 267 (2019), no. 4, 2594-2615.


\bibitem{S(NA)}
Sano, M., {\it Minimization problem associated with an improved Hardy-Sobolev type inequality}, Nonlinear Anal. 200 (2020), 111965, 16 pp.

\bibitem{ST}
Sano, M., Takahashi, F., {\it On eigenvalue problems involving the critical Hardy potential and Sobolev type inequalities with logarithmic weights in two dimensions}, in preparation.


\bibitem{ST(HT)}
Sano, M., Takahashi, F., {\it The critical Hardy inequality on the half-space via harmonic transplantation}, Calc. Var. Partial Differential Equations 61 (2022), no. 4, Paper No. 158.



\bibitem{St}
Strauss, W. A., {\it Existence of solitary waves in higher dimensions}, Comm. Math. Phys. \textbf{55} (1977), no. 2, 149-162.





\bibitem{W}
Willem, M., {\it Minimax theorems}, Progress in Nonlinear Differential Equations and their Applications, 24. Birkh\"auser Boston, Inc., Boston, MA, 1996. x+162 pp.

\bibitem{Y}
Yafaev, D., {\it Sharp constants in the Hardy-Rellich inequalities}, J. Funct. Anal. 168, 121-144 (1999).
\end{thebibliography}
\end{document}